\documentclass[10 pt]{article}
 \usepackage{times}
\usepackage{graphicx} 
\usepackage[english]{babel}
\usepackage{amsthm}
\usepackage{amssymb}
\usepackage{fancyhdr}
\usepackage{xcolor}
\usepackage{hyperref}
\usepackage{pict2e} 
\newcommand\quotientthree[2]{{#1}\Big/{#2}}
\usepackage{url}
\usepackage{amsmath}
\usepackage[dvipsnames]{xcolor}
\usepackage{geometry}
\usepackage{mathtools}
\usepackage{tikz}
\usepackage{subfigure}
\usepackage{tikz-cd}
\usepackage{subcaption}
\usepackage{tikz}
\usetikzlibrary{positioning}
\usepackage{graphicx}
\usetikzlibrary{shapes.geometric, positioning}
\usepackage{caption}
\usepackage{subcaption} 
\usetikzlibrary{positioning,chains,fit,shapes,calc}
\date{}
\usepackage{tikz}

\title{\sc On Posets of Classes of Subgroups with same set of orders of elements }
\author{{\sc Sachin Ballal}\footnote{sachinballal@uohyd.ac.in (Corresponding author)} \hspace{0.1cm}and \sc{Tushar Halder}\footnote{24mmpp03@uohyd.ac.in} \\ \small School of Mathematics and Statistics, University of Hyderabad, 500046, India
}
\theoremstyle{definition}
\newtheorem{theorem}{Theorem}[section]
\newtheorem{corollary}{Corollary}[theorem]
\newtheorem{lemma}[theorem]{Lemma}
\newtheorem{remark}{Remark}
\providecommand{\keywords}[1]
{
	\small	
	\textbf{{Keywords:}} #1
}
\begin{document}
	\maketitle
	\begin{abstract}
		In this paper, we study the posets of classes of subgroups of finite group having same set of orders of elements. We show that this poset is a chain only in the case of $p$-groups and moreover, we characterize all finite groups for which this poset is $C_2$, the chain with two elements. We also show that this poset forms a lattice in the case of finite cyclic and dihedral groups and give a characterization when this lattice is distributive and modular. \vspace{0.2cm}\\
		\keywords{ Posets, lattices, subgroup lattices, poset of class of subgroups, etc.}\\
		\textbf{AMS Subject Classifications:} 06A06, 06B99, 20D30, etc. 
	\end{abstract}
	\section{Introduction}
	The theory of subgroup lattices began with Ada Rottländer \cite{Ada} and was subsequently studied by other researchers, viz., Iwasawa \cite{iwasawa2001collected}, Schmidt \cite{Schmidt}, Suzuki \cite{suzuki}, etc, and has become an important domain of research in group theory since the last century. The study of sets of equivalence classes of subgroups of finite groups by partially ordering them is a novel technique that is explored in \cite{Mainardis1998}, \cite{tarnauceanu2015posetsclassesisomorphicsubgroups}, etc.\par
	For a positive integer $n$, the \emph{dihedral group} of order
	$2n$, denoted by $D_n$, is defined as
	\begin{equation*}
		D_n=\bigl<r,s\hspace{0.2cm}|\hspace{0.2cm} r^n=e,\hspace{0.2cm}s^2=e,\hspace{0.2cm}srs^{-1}=r^{-1}\bigr>.
	\end{equation*} 
	\par Recently, studies of various combinatorial structures as well as subgroup lattices on $D_n$ is explored. Kharat $et \hspace{0.1cm}al.$ in \cite{kharatarticle} studied the poset of all pronormal and Hall subgroups of $D_n$. In \cite{sachinardra}, Ballal $et \hspace{0.1cm} al.$ studied the intersection hypergraph of $D_n$ and examined several graph theoretic properties viz., diameter, girth and chromatic number. Similar studies on several graphs on $D_n$ and its related properties are explored in \cite{reddy2023} and \cite{articleselva}. However, in this paper we consider equivalence classes of subgroups of $D_n$ and study its structure.  \par
	In the following result, a complete listing of subgroups of $D_n$ is given.
	\begin{theorem}
		\cite{conrad} Every subgroup of $D_n$ is cyclic or dihedral. A complete listing of the subgroups is as follows: 
		\begin{enumerate}
			\item $\bigl<r^d\bigr>$, where $d|n$, with index $2d$,
			\item $\bigl<r^d,r^is\bigr>$, where $d|n$ and $0\le i\le d-1$, with index $d$.
		\end{enumerate}
		Every subgroup of $D_n$ occurs exactly once in this listing.
	\end{theorem}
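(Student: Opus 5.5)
Split the argument according to whether a subgroup $H\le D_n$ meets the set of reflections $\{r^is : 0\le i\le n-1\}$. The rotation subgroup $\langle r\rangle$ is cyclic of order $n$ and has index $2$. Every element outside it is a reflection $r^is$ of order $2$. We use the standard relations $(r^is)(r^js)=r^{i-j}$ and $r^k(r^is)=r^{k+i}s$.

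\emph{Case 1: $H\subseteq\langle r\rangle$.} Then $H$ is a subgroup of a cyclic group of order $n$, so $H=\langle r^d\rangle$ for a unique divisor $d$ of $n$. It has order $n/d$, so its index in $D_n$ (of order $2n$) is $2d$. Distinct divisors $d$ give distinct orders, hence distinct subgroups.

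\emph{Case 2: $H$ contains a reflection.} Set $K=H\cap\langle r\rangle$, so $K=\langle r^d\rangle$ for a unique $d\mid n$. Since $K$ has index at most $2$ in $H$ (it is the kernel of $H\to D_n/\langle r\rangle$) and $H\ne K$, we get $|H|=2|K|=2n/d$. Hence the index of $H$ is $d$. Fix a reflection $r^js\in H$. Then $H=K\cup K r^js$, and the reflections in $H$ are exactly $r^{j+kd}s$ for $k\in\mathbb{Z}$. In particular there is a unique $i$ with $0\le i\le d-1$ and $i\equiv j \pmod d$, and $H=\langle r^d,r^is\rangle$. Such an $H$ is dihedral (generated by a rotation and a reflection satisfying the dihedral relation), and in Case 1 $H$ is cyclic. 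This gives the first claim.

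For existence, for every $d\mid n$ and every $0\le i\le d-1$ the set $\langle r^d\rangle\cup\langle r^d\rangle r^is$ is closed under multiplication, using the relations above, so it is a subgroup.

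For uniqueness, the pair $(d,i)$ is recovered from $H$. First, $d$ is determined by $K=H\cap\langle r\rangle$. Second, $i$ is the common residue modulo $d$ of the exponents of the reflections in $H$. Subgroups from different cases also differ, since one family contains reflections and the other does not. So every subgroup appears exactly once.

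The only step needing care is Case 2: showing that the reflections of $H$ form a single coset of $K$. This is what pins down $i$ uniquely in $[0,d-1]$, and it follows from $(r^js)(r^{j'}s)=r^{j-j'}\in K$.
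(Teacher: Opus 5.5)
The paper gives no proof of this statement; it quotes it from Conrad and uses it as a black box, so there is no in-paper argument to compare yours against. Your proof is correct and complete, and it is the standard argument for this result. You split by whether $H$ contains a reflection, write $H\cap\langle r\rangle=\langle r^d\rangle$, and use the kernel of $H\to D_n/\langle r\rangle$ to get $|H|=2n/d$. The observation that the reflections of $H$ form the single coset $\langle r^d\rangle r^is$ then pins down $i$ modulo $d$; this residue is well defined because exponents of $r$ are only defined modulo $n$ and $d\mid n$.

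The one point worth making explicit is why $H$ is \emph{dihedral}. The element $r^d$ has order $n/d$, the element $r^is$ has order $2$, and $(r^is)r^d(r^is)^{-1}=r^{-d}$. So $H$ is a quotient of $D_{n/d}$, and your count $|H|=2n/d$ makes this an isomorphism $H\cong D_{n/d}$. For $d=n$ and $d=n/2$ this gives $D_1\cong\mathbb{Z}_2$ and $D_2\cong\mathbb{Z}_2\times\mathbb{Z}_2$, so ``dihedral'' must be read with that convention.
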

	\begin{remark}
		\begin{enumerate}
			\item A subgroup of $D_n$ is said to be of \textbf{Type (1)} if it is cyclic as stated in (1) of Theorem 1.1.
			\item A subgroup of $D_n$ is said to be of \textbf{Type (2)} if it is dihedral subgroup as stated in (2) of Theorem 1.1.
		\end{enumerate}
	\end{remark}
	We denote the lattice of subgroups of a group $G$ by $\mathcal{L}(G)$ and the identity element of $G$ by $e$. For a subgroup $H$ of a finite group, we denote the set $\{o(x)\hspace{0.1cm}|\hspace{0.1cm}x\in H\}$ by $\pi_e(H)$, where $o(x)$ denotes the order of $x$. The \emph{exponent} of a finite group $G$ is the smallest positive integer $n$, such that $g^n=e$, for all $g\in G$. We denote the \emph{chain} with $n$ elements by $C_n$ and the lattice of positive divisors of an integer $n$ by $T(n)$. For an odd prime $p$, upto isomorphism, there is a unique non abelian group of order $p^3$ with exponent $p$. This group is isomorphic to the group of all upper unitriangular $3\times 3$ matrices over $\mathbb{Z}_p$ and is denoted by $\text{Heis}(\mathbb{Z}_p)$. Unless stated otherwise, $p_1, p_2, \dots, p_k$ will always denote distinct odd primes.
	\par In this paper we study the poset $\widetilde\Pi(G)$ of classes of subgroups with same set of orders of elements. We characterize all finite groups $G$ for which $\widetilde\Pi(G)$ is a chain and in particular, $C_2$. We prove that the poset $\widetilde\Pi(D_n)$ is a lattice and characterize all values of $n$ for which $\widetilde\Pi(D_n)$ is distributive and modular lattice. \par
	For more details on lattices, groups and subgroup lattices, one may refer \cite{Gratzer}, \cite{robinson1996course} and \cite{Schmidt}. 
	\section{The Poset $\mathbf{\widetilde\Pi(\emph{G})}$}
		For a finite group $G$, define an equivalence relation $\equiv$ on $\mathcal{L}(G)$ as follows:
	\begin{equation*}
		H_1 \equiv H_2 \hspace{0.2cm} \text{if and only if} \hspace{0.2cm} \pi_e(H_1)=\pi_e(H_2),      
	\end{equation*} Let $\quotientthree{\mathcal{L}(G)}{\equiv}$ denotes the set of equivalence classes with respect to $\equiv$.
	In \cite{tarnauceanu2015posetsclassesisomorphicsubgroups}, Tărnăuceanu raised a problem to study the posets of classes of subgroups of finite groups with same set of orders of elements. Motivated by this question, we have introduced the following partial ordering on $\quotientthree{\mathcal{L}(G)}{\equiv}$ 
		\begin{equation*}
		[H_1]\lesssim [H_2] \hspace{0.2cm}\text{if and only if } \hspace{0.2cm} \pi_e(H_1)\subseteq \pi_e(H_2), 
	\end{equation*}
	 We call the poset $\biggl(\quotientthree{\mathcal{L}(G)}{\equiv}, \lesssim \biggr)$, as \emph{the poset of classes of subgroups of $G$ with same set of orders of elements} and denote it by ${\widetilde\Pi({G})}$. In this section, we study the poset $\widetilde \Pi(G)$ in particular for dihedral groups. \vspace{0.2cm}\\
		In the following result, we have characterized all finite groups $G$, for which $\widetilde \Pi(G)$ are chains. 
	\begin{theorem}
		The poset $\widetilde\Pi(G)$ is a chain if and only if $G$ is a $p$-group.
	\end{theorem}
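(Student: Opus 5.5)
We prove the two implications separately. The only tools needed are Cauchy's theorem and the elementary structure of element orders in a $p$-group. The trivial group counts as a $p$-group, and its poset $\widetilde\Pi(G)$ has a single element, which is trivially a chain.

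\emph{Sufficiency.} Let $G$ be a $p$-group. Every element order of every subgroup $H\le G$ is then a power of $p$. Let $p^{a}$ be the exponent of $H$, which is the maximum element order in $H$ because element orders in a $p$-group are totally ordered under divisibility. If $x\in H$ has order $p^{a}$, then the powers $x^{p^{a-j}}$ have order $p^j$ for every $0\le j\le a$. Hence
\begin{equation*}
\pi_e(H)=\{1,p,p^2,\dots,p^{a}\}.
\end{equation*}
So each class $[H]$ is determined by the single integer $a$. For two subgroups with exponents $p^a$ and $p^b$, the sets $\{1,\dots,p^a\}$ and $\{1,\dots,p^b\}$ are nested, the smaller contained in the larger. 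Therefore any two classes are comparable under $\lesssim$, and $\widetilde\Pi(G)$ is a chain, in fact isomorphic to $C_{m+1}$, where $p^m$ is the exponent of $G$.

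\emph{Necessity.} We argue by contraposition. Suppose $|G|$ is divisible by two distinct primes $p$ and $q$. By Cauchy's theorem there are elements $x,y\in G$ with $o(x)=p$ and $o(y)=q$. Put $H_1=\langle x\rangle$ and $H_2=\langle y\rangle$. Then
\begin{equation*}
\pi_e(H_1)=\{1,p\},\qquad \pi_e(H_2)=\{1,q\}.
\end{equation*}
Neither set contains the other, so the classes $[H_1]$ and $[H_2]$ are incomparable, and $\widetilde\Pi(G)$ is not a chain.

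There is no real obstacle in this argument. The one point that needs care is the sufficiency direction: one must check that $\pi_e(H)$ is the full initial segment of powers of $p$ up to the exponent, not merely some subset of powers of $p$. Without that, two $p$-subgroups could a priori give incomparable sets. The power-of-a-generator argument above settles this.
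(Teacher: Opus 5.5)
Your proof is correct and follows the paper's argument. For necessity, Cauchy's theorem gives elements of two distinct prime orders whose cyclic subgroups have incomparable sets $\{1,p\}$ and $\{1,q\}$; for sufficiency, in a $p$-group each $\pi_e(H)$ is the initial segment $T(p^{a})$ determined by the exponent of $H$. Your explicit check that $\pi_e(H)$ contains every power of $p$ up to the exponent, via powers of an element of maximal order, fills in a step the paper only asserts.
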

	\begin{proof}
		Let $\widetilde\Pi(G)$ be a chain and on the contrary, assume that $G$ is not a $p$-group, then for distinct prime factors $p_1$ and $p_2$ of $|G|$, there exists elements $a$ and $b$ in $G$ of order $p_1$ and $p_2$, respectively. Clearly $\pi_e(\bigl<a\bigr>)=\{1,p_1\}$ and $\pi_e(\bigl<b\bigr>)=\{1,p_2\}$ and consequently, $[\bigl<a\bigr>]$ and $[\bigl<b\bigr>]$ are incomparable in $\widetilde\Pi(G)$, a contradiction. So, $G$ is a $p$-group.\par
		Conversely, let $G$ be a $p$-group of order $p^n$ and $A$ and $B$ be two subgroups of $G$ with $\text{exp}(A)=p^{m_1}$ and $\text{exp}(B)=p^{m_2}$. Without the loss of generality, if $m_1\le m_2$, then $\pi_e(A)=T(p^{m_1})\subseteq T(p^{m_2})=\pi_e(B)$, which implies that $[A]$ and $[B]$ are comparable and consequently $\widetilde\Pi(G)$ is a chain.
	\end{proof}
	\begin{corollary}
		Let $G_1$ and $G_2$ be such that $\widetilde\Pi(G_1)\cong \widetilde\Pi(G_2)$. If $G_1$ is a $p$-group, then so is $G_2$. Moreover, $\text{exp}(G_1)=\text{exp}(G_2)$. If $\text{exp}(G_1)=p^n$, then $\widetilde\Pi(G_1)\cong C_{n+1} \cong \widetilde\Pi(G_2) $.
	\end{corollary}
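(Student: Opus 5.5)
The plan is to combine Theorem 2.1 with an explicit description of $\widetilde\Pi(G)$ for a $p$-group. First, suppose $G_1$ is a $p$-group. By Theorem 2.1, $\widetilde\Pi(G_1)$ is a chain. Since $\widetilde\Pi(G_2)\cong\widetilde\Pi(G_1)$ and being a chain is preserved under poset isomorphism, $\widetilde\Pi(G_2)$ is also a chain. Applying Theorem 2.1 again, $G_2$ is a $q$-group for some prime $q$.

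Next I will compute $\widetilde\Pi(G)$ exactly for a $p$-group $G$ with $\exp(G)=p^n$. Every subgroup $H\le G$ is a $p$-group, so $\exp(H)=p^m$ with $0\le m\le n$. Moreover, $H$ contains an element $x$ of order $p^m$, and the powers of $x$ realise every order $p^j$ with $j\le m$. Hence $\pi_e(H)=T(p^m)$, as already noted in the proof of Theorem 2.1. Conversely, choose $g\in G$ of order $p^n$. For each $0\le m\le n$, the cyclic subgroup $\bigl<g^{p^{n-m}}\bigr>$ has $\pi_e=T(p^m)$. So the classes are exactly $[\bigl<g^{p^{n-m}}\bigr>]$ for $m=0,\dots,n$. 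They are pairwise distinct and totally ordered by $T(p^{m})\subseteq T(p^{m'})$ for $m\le m'$, which gives $\widetilde\Pi(G)\cong C_{n+1}$.

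Applying this to both groups, write $\exp(G_1)=p^n$ and $\exp(G_2)=q^k$. Then $C_{n+1}\cong\widetilde\Pi(G_1)\cong\widetilde\Pi(G_2)\cong C_{k+1}$, and comparing cardinalities gives $k=n$. This proves $\widetilde\Pi(G_1)\cong C_{n+1}\cong\widetilde\Pi(G_2)$.

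The delicate step is the claim $\exp(G_1)=\exp(G_2)$. The poset only records the sets $\pi_e(H)$ up to isomorphism, so it cannot detect the prime itself. For example, $\mathbb{Z}_2$ and $\mathbb{Z}_3$ both give $C_2$. What the argument above actually establishes is that $\exp(G_1)=p^n$ and $\exp(G_2)=q^n$ with the same $n$. I would therefore state the conclusion in this form, reading ``$G_2$ is a $p$-group'' as ``$G_2$ is a group of prime-power order''. Literal equality of exponents follows exactly when $G_2$ is additionally known to be a $p$-group for the same prime $p$, and in that case it is immediate from $k=n$.
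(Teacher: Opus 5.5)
Your proof is correct and follows the same route the paper intends: the paper writes out no separate proof and presents the corollary as an immediate consequence of Theorem 2.1, using the fact from its proof that $\pi_e(H)=T(p^m)$ for every subgroup $H$ of a $p$-group, exactly as you do. You are also right that the clause $\exp(G_1)=\exp(G_2)$, and ``$G_2$ is a $p$-group'' for the same prime $p$, are false as stated, since $\mathbb{Z}_{p^n}$ and $\mathbb{Z}_{q^n}$ both give $C_{n+1}$; what can actually be proved is your corrected version, namely that $G_2$ is a $q$-group with $\exp(G_2)=q^n$ for the same $n$, and the exponents coincide only when $q=p$.
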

	In the next result, we have characterized classes of finite groups for which when $\widetilde\Pi(G)$ is isomorphic to $C_2$.
	\begin{theorem}
		For a finite group $G$, the poset $\widetilde\Pi(G)\cong C_2$ if and only if $G$ is a cyclic group of order $p$ or $G$ is an elementary abelian $p$-group or $G$ has a subgroup isomorphic to $\text{Heis}(\mathbb{Z}_p)$ with $\text{exp}(G)=p$, where $p$ is a prime.  
	\end{theorem}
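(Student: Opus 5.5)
The plan is to reduce the statement to the condition $\pi_e(G)=\{1,p\}$ for a prime $p$, i.e.\ $G$ is a nontrivial group of prime exponent. I would then observe that the three listed families are exactly a description of such groups.

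First, suppose $\widetilde\Pi(G)\cong C_2$. Then $\widetilde\Pi(G)$ is a chain, so by Theorem 2.1 $G$ is a $p$-group. By Corollary 2.1, if $\exp(G)=p^n$ then $\widetilde\Pi(G)\cong C_{n+1}$, so $n=1$ and $\exp(G)=p$. Concretely, the classes are $[\{e\}]$ with $\pi_e=\{1\}$ and the class with $\pi_e=\{1,p\}$, and an element of order $p^2$ would give a third class.

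Conversely, if $G\ne\{e\}$ has exponent $p$, every nontrivial subgroup has $\pi_e=\{1,p\}$. So there are exactly two classes, $[\{e\}]\lesssim[G]$, and $\widetilde\Pi(G)\cong C_2$.

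It remains to match "nontrivial of exponent $p$" with the three families in the statement.
\begin{itemize}
\item If $G$ is abelian of exponent $p$, it is elementary abelian; this includes the cyclic group of order $p$.
\item If $G$ is nonabelian of exponent $p$, then $p$ is odd, since exponent $2$ forces $G$ to be abelian.
\end{itemize}
Each listed family has exponent $p$ (in the third case this is assumed outright), which settles the "if" direction. So the real content is the following claim: a nonabelian group of exponent $p$ contains a copy of $\text{Heis}(\mathbb{Z}_p)$.

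This claim is the step I expect to be the main obstacle. The argument I would try is this.
\begin{enumerate}
\item Since $G$ is nonabelian, choose noncommuting $x,y\in G$ and set $H=\langle x,y\rangle$.
\item $H$ is a nonabelian $p$-group, so $H'\neq 1$.
\item Because $H$ is a $p$-group, $H'\ne H$, so $H/H'$ is nontrivial abelian. It is generated by the images of $x$ and $y$, has exponent $p$, and cannot be cyclic (if $H/H'$ were cyclic then $H/\Phi(H)$ would be cyclic, making $H$ cyclic). Hence $H/H'\cong \mathbb{Z}_p^2$.
\item Take $K=H/\gamma_3(H)$. It is a $2$-generated group of class exactly $2$ and exponent $p$.
\item In $K$, the commutator $[\bar x,\bar y]$ is central and generates $K'$, so $|K'|=p$ and $|K|=p^3$.
\item Therefore $K\cong\text{Heis}(\mathbb{Z}_p)$, by the uniqueness of the nonabelian group of order $p^3$ and exponent $p$ stated in the introduction.
\end{enumerate}

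This produces a section (a quotient of a subgroup) isomorphic to $\text{Heis}(\mathbb{Z}_p)$, not yet a subgroup. To get an actual subgroup, I would take a minimal nonabelian subgroup $M$ of $G$. Every proper subgroup of $M$ is abelian, and the classification of minimal nonabelian $p$-groups (Rédei) in exponent $p$ leaves only the order-$p^3$ group. This gives $M\cong\text{Heis}(\mathbb{Z}_p)$. If invoking Rédei feels too heavy, I would argue directly by induction on $|M|$, using that $M'$ has order $p$ and is central in a minimal nonabelian group. Combining both directions yields the theorem.
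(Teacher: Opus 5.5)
Your proof is correct and follows the paper's outline. In both, $\widetilde\Pi(G)\cong C_2$ forces $G$ to be a $p$-group of exponent $p$; the abelian case gives an elementary abelian group, and the nonabelian case yields a nonabelian subgroup of order $p^3$ of exponent $p$, hence $\text{Heis}(\mathbb{Z}_p)$. The one difference is that the paper simply asserts that this order-$p^3$ subgroup exists, whereas your minimal nonabelian subgroup $M$ justifies it: $M'$ is central of order $p$, so $\gamma_3(M)=1$, and your section computation applied to $H=M$ gives $|M|=p^3$ with no induction needed.
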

	\begin{proof}
		Certainly if $G\cong \mathbb{Z}_p$ or $G\cong \mathbb{Z}_p\times \dots \times \mathbb{Z}_p$ or $G$ has a subgroup isomorphic to $\text{Heis}(\mathbb{Z}_p)$ with $\text{exp}(G)=p$, then $\widetilde\Pi(G)\cong C_2$. \par
		Conversely, let $\widetilde\Pi(G)\cong C_2$, then $\text{exp}(G)=p$, else there is a subgroup of $G$ of order $p^2$ which implies that there is a sublattice of $\widetilde\Pi(G)$ which is isomorphic to $ C_3$, which is not possible. Now, if $G$ is abelian, then $G$ is a cyclic group of order $p$ or an elementary abelian $p$-group. If $G$ is non-abelian, then $G$ has a non abelian subgroup $H$ of order $p^3$. Since $\exp(G)=p$, so $\exp(H)=p$ and hence $H\cong \text{Heis}(\mathbb{Z}_p)$.  
	\end{proof}
	We now focus on finite cyclic groups. The next result shows that our poset under study coincides with subgroup lattice in the case of $\mathbb{Z}_n$
	\begin{theorem}
		For all positive integer $n$, the poset ${\widetilde\Pi({\mathbb{Z}_n})}$ is isomorphic to $\mathcal{L}(\mathbb{Z}_n)$ and in particular, ${\widetilde\Pi({\mathbb{Z}_n})}$ is a lattice.
	\end{theorem}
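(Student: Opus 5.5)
The plan is to write down an explicit order isomorphism $\mathcal{L}(\mathbb{Z}_n)\to\widetilde\Pi(\mathbb{Z}_n)$, namely $H\mapsto [H]$, and check that it is a bijection that preserves and reflects the order. I will use two standard facts about cyclic groups. First, for each divisor $d$ of $n$, $\mathbb{Z}_n$ has exactly one subgroup $H_d$ of order $d$, which is cyclic. Second, $H_d\subseteq H_{d'}$ if and only if $d\mid d'$. As a consequence, $\mathcal{L}(\mathbb{Z}_n)\cong T(n)$ via $H_d\mapsto d$.

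The key computation is $\pi_e(H_d)=T(d)$, the set of positive divisors of $d$. Indeed, $H_d\cong\mathbb{Z}_d$, so every element order divides $d$. Conversely, a cyclic group of order $d$ contains an element of order $k$ for each $k\mid d$: if $g$ generates $H_d$, then $g^{d/k}$ has order $k$.

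From this, the map $H_d\mapsto[H_d]$ is injective on subgroups. If $\pi_e(H_d)=\pi_e(H_{d'})$, then taking maximal elements gives $d=\max T(d)=\max T(d')=d'$, so $H_d=H_{d'}$. Hence each $\equiv$-class is a singleton, and the map is a bijection onto $\widetilde\Pi(\mathbb{Z}_n)$.

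For order preservation in both directions, I check that $T(d)\subseteq T(d')$ holds if and only if $d\mid d'$:
\begin{itemize}
\item if $d\mid d'$, every divisor of $d$ divides $d'$;
\item conversely, $d\in T(d)\subseteq T(d')$ gives $d\mid d'$.
\end{itemize}
Therefore $[H_d]\lesssim[H_{d'}]$ if and only if $H_d\le H_{d'}$, so $\widetilde\Pi(\mathbb{Z}_n)\cong\mathcal{L}(\mathbb{Z}_n)\cong T(n)$. Since $\mathcal{L}(\mathbb{Z}_n)$ is a lattice, with meet given by $\gcd$ and join by $\operatorname{lcm}$ under the identification with $T(n)$, the poset $\widetilde\Pi(\mathbb{Z}_n)$ is a lattice as well.

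None of these steps is a real obstacle. The only point that needs care is the identity $\pi_e(H_d)=T(d)$, in particular that every divisor of $d$ actually occurs as an element order. This is exactly what makes both the injectivity and the order-reflection work, and it is immediate from cyclicity.
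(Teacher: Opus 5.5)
Your proof is correct and follows essentially the same route as the paper. Both use the map $H\mapsto[H]$, obtain injectivity from the fact that the largest element order in a subgroup of $\mathbb{Z}_n$ is its own order, and then check that the map preserves and reflects the order. The only difference is cosmetic. The paper reflects the order elementwise: for $x\in H_1$ it picks $y\in H_2$ of the same order, so $\langle x\rangle=\langle y\rangle\le H_2$. You instead compute $\pi_e(H_d)=T(d)$ once and reduce everything to divisibility, which is slightly cleaner.
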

	\begin{proof}
		Consider the map $\Phi: \mathcal{L}(\mathbb{Z}_n)\to \widetilde\Pi(\mathbb{Z}_n)$ by $\Phi(H)=[H]$. It is clear that $|{\widetilde\Pi({\mathbb{Z}_n})}| \le |\mathcal{L}(\mathbb{Z}_n)|$. Now, let $H$ and $K$ be two distinct subgroups of $\mathbb{Z}_n$, with order $m_1$ and $m_2$, respectively. Certainly, when $m_1> m_2$, $\pi_e(H)\ne \pi_e(K)$ as $m_1\notin \pi_e(K)$ but $m_1\in \pi_e(H)$ and therefore, $[H]$ and $[K]$ are distinct classes. Thus, $|\mathcal{L}(\mathbb{Z}_n)|\le|{\widetilde\Pi({\mathbb{Z}_n})}|$ and hence, $|{\widetilde\Pi({\mathbb{Z}_n})}| = |\mathcal{L}(\mathbb{Z}_n)|$. This implies that $\Phi$ is well defined and bijective. \par 
		If $[H_1]\lesssim [H_2]$, then $\pi_e(H_1)\subseteq\pi_e(H_2)$. Let $x\in H_1$ be arbitrary, then $o(x)\in \pi_e(H_1)$, and as $\pi_e(H_1)\subseteq\pi_e(H_2)$ so, there is $y\in H_2$ with $o(y)=o(x)$. As, subgroups of order $o(x)$ is unique, we have $<y>=<x>$, therefore there is a positive integer $m'$ with $x=y^{m'}$ and thus $x\in H_2$, since $x$ is arbitrary, we have $H_1\le H_2$. Moreover, if $H_1\le H_2$, then certainly, $\pi_e(H_1)\subseteq \pi_e(H_2)$ and thus $[H_1]\lesssim [H_2]$. So, $\Phi$ is a bijective isotone between $\mathcal{L}(\mathbb{Z}_n)$ and $\widetilde\Pi(\mathbb{Z}_n)$, which implies that $\Phi$ is a lattice isomorphism.
	\end{proof}
	In order to show that $\widetilde{\Pi}(D_n)$ is a lattice, we essentially need the following result.
	\begin{lemma}
		Let $G$ be a finite group and $[H_1],[H_2]\in \widetilde\Pi(G)$. Consider the set \[X=\bigcap_{\mathclap{{\substack{\bar L \in \mathcal{L}(G) \\ \pi_e(H),\pi_e(K)\subseteq \pi_e(\bar L)}}}} \pi_e(\bar L).\]
		The set  $\{[H],[K]\}$ has a least upper bound if and only if there exists a subgroup $L'\in \mathcal{L}(G)$ with $\pi_e(L')=X$. Moreover, $[H]\vee' [K]=[L']$. \par Similarly, consider the set \[Y=\bigcup_{\mathclap{{\substack{\widehat L \in \mathcal{L}(G) \\ \pi_e(\widehat L)\subseteq\pi_e(H),\pi_e(K)}}}} \pi_e(\widehat L).\] The set  $\{[H],[K]\}$ has a greatest lower bound if and only if there exists a subgroup $L''\in \mathcal{L}(G)$ with $\pi_e(L'')=Y$. Moreover, $[H]\wedge' [K]=[L'']$.
	\end{lemma}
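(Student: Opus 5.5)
The plan is to treat $\widetilde\Pi(G)$ as a family of subsets of $\pi_e(G)$, since $[H]\mapsto\pi_e(H)$ is an order-embedding of $\widetilde\Pi(G)$ into the power set of $\pi_e(G)$ ordered by inclusion. Under this identification the upper bounds of $\{[H],[K]\}$ are exactly the classes $[\bar L]$ with $\pi_e(H)\cup\pi_e(K)\subseteq\pi_e(\bar L)$. The set $X$ is the intersection of all such $\pi_e(\bar L)$. This family is nonempty because $\bar L=G$ qualifies, so $X$ is well defined, and $\pi_e(H)\cup\pi_e(K)\subseteq X$. (I read $H_1,H_2$ in the statement as $H,K$.) Dually, the lower bounds are the classes $[\widehat L]$ with $\pi_e(\widehat L)\subseteq\pi_e(H)\cap\pi_e(K)$. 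Their union $Y$ is over a nonempty family, since the trivial subgroup qualifies, and $Y\subseteq \pi_e(H)\cap\pi_e(K)$.

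For the join, I first prove the ``if'' direction. Suppose $L'\le G$ with $\pi_e(L')=X$.
\begin{enumerate}
\item Since $\pi_e(H),\pi_e(K)\subseteq X=\pi_e(L')$, the class $[L']$ is an upper bound.
\item For any upper bound $[\bar L]$, the set $\pi_e(\bar L)$ is one of the sets being intersected, so $\pi_e(L')=X\subseteq\pi_e(\bar L)$, that is, $[L']\lesssim[\bar L]$.
\end{enumerate}
Hence $[L']$ is the least upper bound, which also gives the ``moreover'' claim $[H]\vee'[K]=[L']$.

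For the ``only if'' direction, suppose the least upper bound $[M]$ exists.
\begin{enumerate}
\item $[M]$ is itself an upper bound, so $\pi_e(M)$ occurs in the intersection, giving $X\subseteq\pi_e(M)$.
\item For every upper bound $[\bar L]$ we have $[M]\lesssim[\bar L]$, so $\pi_e(M)\subseteq\pi_e(\bar L)$ for all such $\bar L$, giving $\pi_e(M)\subseteq X$.
\end{enumerate}
Thus $\pi_e(M)=X$, and $L'=M$ works.

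The meet is the exact dual, with unions in place of intersections. If $\pi_e(L'')=Y$, then $Y\subseteq\pi_e(H)\cap\pi_e(K)$ makes $[L'']$ a lower bound. Every lower bound $\widehat L$ satisfies $\pi_e(\widehat L)\subseteq Y$, so $[L'']$ lies above every lower bound and is therefore the greatest one. Conversely, suppose a greatest lower bound $[N]$ exists. Then $\pi_e(N)$ is one of the sets in the union, so $\pi_e(N)\subseteq Y$. Every lower bound is below $[N]$, so each $\pi_e(\widehat L)\subseteq\pi_e(N)$, and hence $Y\subseteq\pi_e(N)$. So $\pi_e(N)=Y$.

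There is no real obstacle here. The only care needed is well-definedness: the relation $\lesssim$ and the sets $X$ and $Y$ depend only on the classes, not on the chosen representatives, because they are defined purely through $\pi_e$. One should also check that the indexing families are nonempty, which $G$ and $\{e\}$ guarantee, so that $X$ and $Y$ make sense.
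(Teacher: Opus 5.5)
Your proof is correct and follows essentially the same argument as the paper: for the ``if'' directions you show that a subgroup realizing $X$ (resp.\ $Y$) gives an upper (resp.\ lower) bound lying below (resp.\ above) every other such bound, and for the ``only if'' directions you obtain $\pi_e(M)=X$ (resp.\ $\pi_e(N)=Y$) from the same two inclusions the paper uses. Your explicit treatment of the meet and your check that the indexing families are nonempty (via $G$ and $\{e\}$) supply details that the paper dismisses as following ``on similar lines.''
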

	\begin{proof}
		Let there exists a subgroup $L'\in \mathcal{L}(G)$ with  $\pi_e(L')=X$. Since $\pi_e(H),\pi_e(K)\subseteq X$, it follows that $[L']$ is an upper bound of $\{[H],[K]\}\}$. Moreover, if $[S]$ is an upper bound of $\{[H],[K]\}\}$, then $\pi_e(H),\pi_e(K)\subseteq \pi_e(S)$ and thus $\pi_e(L')\subseteq X\subseteq \pi_e(S)$ and therefore $[H]\vee'[K]=[L']$. \par Conversely, suppose $[H]\vee'[K]=[L']$, then by definition of $X$ it follows that $X\subseteq \pi_e(L')$. Now suppose $\bar L$ is such that $\pi_e(H), \pi_e(K)\subseteq \bar L$, then $[\bar L]$ is an upper bound of $[H]$ and$[K]$. Since $[L']$ is the least upper bound of $[H]$ and $[K]$, it follows that $\pi_e(L')\subseteq X$. The second part follows on similar lines.
	\end{proof}
	In the light of Lemma 2.4, we prove the following:
	\begin{theorem}
		For positive integer $n$, the poset $\widetilde\Pi(D_n)$ is a lattice.
	\end{theorem}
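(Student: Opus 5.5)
First compute $\pi_e$ of every subgroup using Theorem 1.1. For a Type (1) subgroup $\langle r^d\rangle$, which is cyclic of order $n/d$, we get $\pi_e(\langle r^d\rangle)=T(n/d)$. For a Type (2) subgroup $\langle r^d,r^is\rangle\cong D_{n/d}$, every element outside the rotation part is a reflection of order $2$. Hence $\pi_e(\langle r^d,r^is\rangle)=T(n/d)\cup\{2\}$, and this does not depend on $i$. So the elements of $\widetilde\Pi(D_n)$ are exactly the sets
\[A_m=T(m)\quad\text{and}\quad B_m=T(m)\cup\{2\},\qquad m\mid n,\]
ordered by inclusion. Some of these coincide: $A_m=B_m$ when $m$ is even, and $B_1=A_2$ when $n$ is even. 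I would record this list explicitly, keeping track of the parity of $n$.

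Since $\widetilde\Pi(D_n)$ is finite and has least element $[\{e\}]$ and greatest element $[D_n]$, it suffices to show that any two elements have a meet. A finite bounded meet-semilattice is a lattice, because the join of two elements is the meet of their common upper bounds, a nonempty set. Alternatively, I would verify both parts of Lemma 2.4 directly, showing in each case that the set $X$ (resp.\ $Y$) is itself realized by a subgroup.

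The key step is the computation of meets by cases, using $T(a)\cap T(b)=T(\gcd(a,b))$ and $T(a)\subseteq T(b)$ iff $a\mid b$. For $A_a$ and $A_b$, the greatest lower bound is $A_{\gcd(a,b)}$. Any lower bound $S$ satisfies $S\subseteq T(a)\cap T(b)=T(\gcd(a,b))$, and this set is realized by $\langle r^{n/\gcd(a,b)}\rangle$.

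For $B_a$ and $B_b$, the intersection is $T(\gcd(a,b))\cup\{2\}=B_{\gcd(a,b)}$, which is realized by a dihedral subgroup. For $A_a$ and $B_b$, the intersection is $T(a)\cap(T(b)\cup\{2\})$. If $a$ is odd or $2\nmid n$, this equals $T(\gcd(a,b))$. If $a$ is even, it equals $T(\gcd(a,b))\cup\{2\}$, which is $T(\gcd(a,b,\cdot))$ adjusted by parity, namely $T(\mathrm{lcm}(\gcd(a,b),2))$ when $2\mid a$. That is the set of orders of the cyclic subgroup of order $\mathrm{lcm}(\gcd(a,b),2)$, which divides $n$ since $2\mid a\mid n$.

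In each case the intersection of the two $\pi_e$-sets is itself some $\pi_e(L'')$. It is then automatically the greatest lower bound, since every common lower bound is contained in the intersection. This shows $Y$ is realized, and by Lemma 2.4 the meet exists.

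For joins I would argue directly in the same way, as a check. For $A_a$ and $A_b$, the smallest realizable set containing $T(a)\cup T(b)$ is $A_{\mathrm{lcm}(a,b)}$: any $T(m)$ or $T(m)\cup\{2\}$ containing both $a$ and $b$ has $\mathrm{lcm}(a,b)\mid m$, because $a,b$ either divide $m$ or equal $2$, and in the latter case $2\mid \mathrm{lcm}$ is handled by parity. Mixed and $B$–$B$ cases give $B_{\mathrm{lcm}(a,b)}$. So $X$ is realized.

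The main obstacle is this bookkeeping of the element $2$ for even $a$ or $n$, including identifications like $B_1=A_2$. I would handle it by always normalizing each class to its canonical representative: $A_m$ when $m$ is even, and $B_m$ only when $m$ is odd.
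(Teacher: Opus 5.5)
Your strategy is sound and differs from the paper's. You list all realizable sets, $T(m)$ and $T(m)\cup\{2\}$ for $m\mid n$, and show this family is closed under intersection. Joins then come for free, because a finite bounded meet-semilattice is a lattice. The paper instead splits into cases according to whether $2\in\pi_e(H)$ and $2\in\pi_e(K)$, treating the class $\{1,2\}$ separately. In every case it writes down explicit subgroups $L'$ and $L''$ for both the join and the meet, and verifies each through Lemma 2.4. Your route is shorter and more conceptual, since it shows that meets are literally intersections of $\pi_e$-sets. The paper's route costs more bookkeeping but yields explicit join and meet formulas, which are what Theorems 2.6, 2.8 and 2.9 actually use.

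Two of your explicit formulas are false, however.

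First, take the mixed meet case with $a$ even and put $g=\gcd(a,b)$. When $g$ is odd and $g>1$, $T(g)\cup\{2\}$ is not $T(\mathrm{lcm}(g,2))$. In $D_6$, $A_6\cap B_3=\{1,2,3\}$, while $T(6)=\{1,2,3,6\}$, and no cyclic subgroup has $\pi_e=\{1,2,3\}$. The correct realizer is the dihedral subgroup $\langle r^{n/g},s\rangle$, i.e.\ the intersection is $B_g$. With this replacement, closure under intersection holds and your argument goes through.

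Second, your join check breaks exactly at the class $\{1,2\}=A_2=B_1$ (for $n$ even). In $D_6$ the classes of $\langle r^3\rangle$ and $\langle r^2\rangle$ have $\pi_e$-sets $\{1,2\}$ and $\{1,3\}$. Their join is the class of $\langle r^2,s\rangle$, with $\pi_e=\{1,2,3\}$, not $A_{\mathrm{lcm}(2,3)}=A_6$. Similarly, for odd $b>1$, $A_2\vee B_b=B_b$, whereas your mixed-case formula predicts $B_{\mathrm{lcm}(2,b)}=B_{2b}$. The step ``$a,b$ either divide $m$ or equal $2$ \ldots handled by parity'' fails because $2\in T(m)\cup\{2\}$ does not force $2\mid m$. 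Since the semilattice argument already supplies joins, drop this check. If you want explicit joins, you must isolate $\{1,2\}$, as the paper does in Subcase 2.1 and at the start of Case 3.
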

	\begin{proof}
		Let $n=2^\alpha \prod\limits_{i=1}^{k}p_i^{t_i}$, where $p_i$'s are distinct odd primes with $\alpha, t_i's\ge 0$ and $[H],[K]\in \widetilde \Pi(D_n)$ be arbitrary classes. It is sufficient to show that the least upper bound and the greatest lower bound of $\{[H],[K]\}$ exists. So consider the following cases: \vspace{0.2cm}\\
		\textbf{Case 1:} If $2\notin \pi_e(H),\pi_e(K)$, then $H=\biggl<r^{2^\alpha\prod\limits_{i=1}^{k} p_{i}^{u_i}}\biggr>$ and $K=\biggl<r^{2^\alpha\prod\limits_{i=1}^{k} p_{i}^{v_i}}\biggr>$, where $0\le u_i,v_i\le t_i$ and $1\le i\le k$, and therefore, \begin{equation*}
			\pi_e(H)= T \biggl( \prod\limits_{i=1}^{k} p_i^{t_i-u_i} \biggr) \hspace{0.2cm}\text{and}\hspace{0.2cm}     \pi_e(K)= T \biggl(\prod\limits_{i=1}^{k} p_i^{t_i-v_i} \biggr).
		\end{equation*} 
		So, we claim that $[L']$ is the least upper bound of $\{[H],[K]\}$, where $L'=\biggl<r^{2^\alpha\prod\limits_{i=1}^{k} p_{i}^{\min\{u_i,v_i\}}}\biggr>$. Clearly, $\pi_e(H),\pi_e(K)\subseteq \pi_e(L')$ as $H,K \le L'$ and therefore, \begin{equation*}
			\bigcap_{\mathclap{{\substack{\bar L \in \mathcal{L}(G) \\ \pi_e(H),\pi_e(K)\subseteq \pi_e(\bar L)}}}} \pi_e(\bar L) \subseteq \pi_e(L'). 
		\end{equation*}
		Now, suppose that $\prod\limits_{i=1}^{k} p_{i}^{x_i}\in \pi_e(L')$ with $0\le x_i\le t_i-\min \{u_i,v_i\}$, $1\le i\le k$ and $\bar L\le G$ with $\pi_e(H),\pi_e(K)\subseteq\pi_e(\bar L)$. Then $\bar L$ has an element of order $\prod\limits_{i=1}^{k} p_{i}^{t_i-u_i}$, say, $\biggl<r^{a_12^\alpha\prod\limits_{i=1}^{k} p_{i}^{u_i}}\biggr>$ and an element of order $\prod\limits_{i=1}^{k} p_{i}^{t_i-v_i}$, say, $\biggl<r^{a_22^\alpha\prod\limits_{i=1}^{k} p_{i}^{v_i}}\biggr>$. Now, as $\biggl<r^{a_12^\alpha\prod\limits_{i=1}^{k} p_{i}^{u_i}}\biggr>=\biggl<r^{2^\alpha\prod\limits_{i=1}^{k} p_{i}^{u_i}}\biggr>$ and $\biggl<r^{a_22^\alpha\prod\limits_{i=1}^{k} p_{i}^{v_i}}\biggr>=\biggl<r^{2^\alpha\prod\limits_{i=1}^{k} p_{i}^{v_i}}\biggr>$, so $\biggl<r^{a_12^\alpha\prod\limits_{i=1}^{k}p_i^{u_i}}\biggr>\vee\biggl<r^{a_22^\alpha\prod\limits_{i=1}^{k}p_i^{v_i}}\biggr>=\biggl<r^{2^\alpha\prod\limits_{i=1}^{k}p_i^{u_i}}\biggr>\vee\biggl<r^{2^\alpha\prod\limits_{i=1}^{k}p_i^{v_i}}\biggr>\le \bar L$ and therefore, $\biggl<r^{2^\alpha\prod\limits_{i=1}^{k}p_i^{\min\{u_i,v_i\}}} \biggr>\le \bar L$, and consequently, $\bar L$ contains an element of order $\prod\limits_{i=1}^{k}p_i^{t_i-\min\{u_i,v_i\}}$. Note that, $\biggl<r^{2^\alpha\prod\limits_{i=1}^{k}p_i^{\min\{u_i,v_i\}}} \biggr>$ contains elements of order $\prod\limits_{i=1}^{k}p_i^{x}$ where $0\le x \le t_i-\min \{u_i,v_i\}$, as cyclic group satisfies converse to Lagrange's theorem, so $\bar L$ contains an element of order $\prod\limits_{i=1}^{k}p_i^{x}$ where $0\le x \le t_i-\min \{u_i,v_i\}$. Since $\bar L$ was arbitrary, we have,
		\begin{equation*}
			\pi_e(L')\subseteq\bigcap_{\mathclap{{\substack{\bar L \in \mathcal{L}(G) \\ \pi_e(H),\pi_e(K)\subseteq \pi_e(\bar L)}}}} \pi_e(\bar L).
		\end{equation*}
		Thus $[L']$ is the least upper bound of $\{[H],[K]\}$. \par Now, we claim that $[L'']$ is the greatest lower bound of $\{[H],[K]\}$, where $L''=\biggl<r^{2^\alpha\prod\limits_{i=1}^{k}p_i^{\max\{u_i,v_i\}}}\biggr>$. Clearly, $\pi_e(L'')\subseteq \pi_e(H),\pi_e(K)$ as $L''\le H, K$ and therefore,
		\begin{equation*}
			\pi_e(L'')\subseteq\bigcup_{\mathclap{{\substack{\widehat L \in \mathcal{L}(G) \\ \pi_e(\widehat L)\subseteq\pi_e(H),\pi_e(K)}}}} \pi_e(\widehat L).
		\end{equation*}
		Let $\widehat L\le G$ with $\pi_e(\widehat L)\subseteq \pi_e(H),\pi_e(K)$. Suppose $m\in \pi_e(\widehat L)$, then $m\in \pi_e(H), \pi_e(K)$ and which implies that $m$ divides $|H|$ and $|K|$, i.e., $m$ divides $\prod\limits_{i=1}^{k}p_i^{t_i-u_i}$ and $\prod\limits_{i=1}^{k}p_i^{t_i-v_i}$. So, $m$ divides $\gcd\biggl(\prod\limits_{i=1}^{k}p_i^{t_i-u_i},\prod\limits_{i=1}^{k}p_i^{t_i-u_i}\biggr) $, i.e., $m$ divides $\prod\limits_{i=1}^{k}p_i^{\min \{t_i-u_i,t_i-v_i\}}=\prod\limits_{i=1}^{k}p_i^{t_i-\max \{u_i,v_i\}}$ and this implies $m$ divides the order of $r^{2^\alpha\prod\limits_{i=1}^{k}p_i^{\max\{u_i,v_i\}}} $ and hence $m\in \pi_e(L'')$. Since $\widehat L$ was arbitrary, we have,
		\begin{equation*}
			\bigcup_{\mathclap{{\substack{\widehat L \in \mathcal{L}(G) \\ \pi_e(\widehat L)\subseteq\pi_e(H),\pi_e(K)}}}} \pi_e(\widehat L)\subseteq \pi_e(L'').
		\end{equation*}
		Thus, $[L'']$ is the greatest lower bound of $\{[H],[K]\}$.\vspace{0.2cm}\\
		\textbf{Case 2:} If $2\notin \pi_e(H)$ and $2\in \pi_e(K)$, then we have the following subcases: \vspace{0.2cm}\\
		\textbf{Subcase 2.1:} If $\{1,2\}=\pi_e(K)$, then $K=\bigl<r^ls\bigr>$ or $\biggl<r^{2^{\alpha-1}\prod\limits_{i=1}^{k}p_i^{t_i}}\biggr>$ or $\biggl<r^{2^{\alpha-1}\prod\limits_{i=1}^{k}p_i^{t_i}},r^js\biggr> $, where $0\le l\le n-1$, $0\le j\le \biggl(2^{\alpha-1} \prod\limits_{i=1}^{k}p_i^{t_i}\biggr)-1$. As, $2\notin \pi_e(H)$, we have $H=\biggl<r^{2^\alpha\prod\limits_{i=1}^{k} p_{i}^{u_i}}\biggr>$ with $0\le u_i\le t_i$, $1\le i\le k$. We claim that $[L']$ is the least upper bound of $\{[H],[K]\}$, where $L'=\biggl<r^{{2^\alpha} \prod\limits_{i=1}^{k}p_i^{u_i}},s\biggr>$ . Certainly, $\pi_e(H),\pi_e(K)\subseteq \pi_e(L')$ as $H\le L'$ and $\pi_e(K)=\{1,2\}\subseteq\pi_e(L')$ and thus, we have, \begin{equation*}
			\bigcap_{\mathclap{{\substack{\bar L \in \mathcal{L}(G) \\ \pi_e(H),\pi_e(K)\subseteq \pi_e(\bar L)}}}} \pi_e(\bar L) \subseteq \pi_e(L'). 
		\end{equation*}
		Let $m\in \pi_e(L')$ and $\bar L\le G$ with $\pi_e(H),\pi_e(K)\subseteq \pi_e(\bar L)$, then $m=2$ or $m\in T\biggl(\prod\limits_{i=1}^{k}p_i^{t_i-u_i}\biggr)$. As, $\pi_e(K)=\{1,2\}$ and $\pi_e(H)=T\biggl(\prod\limits_{i=1}^{k}p_i^{t_i-u_i}\biggr)$, so $m\in \pi_e(\bar L)$. Since, $\bar L$ is arbitrary, we have,    \begin{equation*}
			\pi_e(L')\subseteq\bigcap_{\mathclap{{\substack{\bar L \in \mathcal{L}(G) \\ \pi_e(H),\pi_e(K)\subseteq \pi_e(\bar L)}}}} \pi_e(\bar L).
		\end{equation*}
		Therefore, $[L']$ is the least upper bound of $\{[H],[K]\}$. \par
		We claim that $[L'']$ is the greatest lower bound of $\{[H],[K]\}$, where $L''=\bigl<e\bigr>$. Certainly, $\pi_e(L'')\subseteq \pi_e(H), \pi_e(K)$, as $\pi_e(L'')=\{1\}$ and therefore we have,   \begin{equation*}
			\pi_e(L'')\subseteq\bigcup_{\mathclap{{\substack{\widehat L \in \mathcal{L}(G) \\ \pi_e(\widehat L)\subseteq\pi_e(H),\pi_e(K)}}}} \pi_e(\widehat L).
		\end{equation*}
		Let $m\in \pi_e(\widehat L)$, where $\widehat L\le G$ with $\pi_e(\widehat L)\subseteq \pi_e(H),\pi_e(K)$ so that $m\in T\biggl(\prod\limits_{i=1}^{k}p_i^{t_i-u_i}\biggr)\bigcap \{1,2\}=\{1\} $, which implies that $m=1\in \pi_e(L'') $, and hence,   \begin{equation*}
			\bigcup_{\mathclap{{\substack{\widehat L \in \mathcal{L}(G) \\ \pi_e(\widehat L)\subseteq\pi_e(H),\pi_e(K)}}}} \pi_e(\widehat L)\subseteq \pi_e(L'').
		\end{equation*}
		Therefore, $[L'']$ is the greatest lower bound of $\{[H],[K]\}$.\vspace{0.2cm}\\
		\textbf{Subcase 2.2:} If $\{1,2\}\subsetneq \pi_e(K)$, then $K=\biggl<r^{2^\beta\prod\limits_{i=1}^{k}p_i^{v_i}}\biggr>$, where $0\le\beta\le \alpha-1$ and if $\beta=\alpha-1$, then not all $v_i=t_i$ or $K=\biggl<r^{2^\beta\prod\limits_{i=1}^{k}p_i^{v_i}}, r^js\biggr>$, where $0\le \beta \le \alpha $, $0\le v_i\le t_i$, $1\le i\le k$, $0\le j\le \biggl(2^\beta\prod\limits_{i=1}^{k}p_i^{v_i}\biggr)-1$ and if $\beta$ is $\alpha-1$ or $\alpha $, then not all $v_i=t_i$. As, $2\notin \pi_e(H)$, we have $H=\biggl<r^{2^\alpha\prod\limits_{i=1}^{k} p_{i}^{u_i}}\biggr>$ with $0\le u_i\le t_i$, $1\le i\le k$. We claim that $[L']$ is the least upper bound of $\{[H],[K]\}$, where $L'=\biggl<r^{2^\beta \prod\limits_{i=1}^{k}\min\{u_i,v_i\}},s\biggr>$. Certainly, $\pi_e(H), \pi_e(K)\subseteq\pi_e(L')$ as $H\le L'$ and $\pi_e(K)=T\biggl(2^{\alpha-\beta}\prod\limits_{i=1}^{k}p_i^{t_i-v_i}\biggr)\subseteq T\biggl(2^{\alpha-\beta}\prod\limits_{i=1}^{k}p_i^{t_i-\min \{u_i,v_i\}}\biggr)=\pi_e(L') $. So, we have,  \begin{equation*}
			\bigcap_{\mathclap{{\substack{\bar L \in \mathcal{L}(G) \\ \pi_e(H),\pi_e(K)\subseteq \pi_e(\bar L)}}}} \pi_e(\bar L) \subseteq \pi_e(L'). 
		\end{equation*}  
		For $m\in \pi_e(L')$, we have $m\in T\biggl(2^{\alpha-\beta} \prod\limits_{i=1}^{k}p_i^{t_i-\min\{u_i,v_i\}}\biggr)\bigcup \{2\}$. Let $\bar L\le G$ with $\pi_e(H),\pi_e(K)\subseteq \pi_e(\bar L)$. Then $\bar L$ contains an element of order $\prod\limits_{i=1}^{k}p_i^{t_i-u_i}$, since all elements of order $\prod\limits_{i=1}^{k}p_i^{t_i-u_i}$ are in $H$ and generates $H$, hence, $H\le L$. Moreover, $L$ contains an element of order $2^{\alpha-\beta}\prod\limits_{i=1}^{k}p_i^{t_i-v_i}$. We have that all element of order $2^{\alpha-\beta}\prod\limits_{i=1}^{k}p_i^{t_i-v_i}$ are in $\biggl<r^{2^\beta \prod\limits_{i=1}^{k}p_i^{v_i}}\biggr>$ or $\biggl<r^{2^\beta \prod\limits_{i=1}^{k}p_i^{v_i}},r^js\biggr>$, and all such elements generate $\biggl<r^{2^\beta \prod\limits_{i=1}^{k}p_i^{v_i}}\biggr>$. Hence,$\biggl<r^{2^\beta \prod\limits_{i=1}^{k}p_i^{v_i}}\biggr>\le \bar L$ and therefore, $\biggl<r^{2^\alpha\prod\limits_{i=1}^{k} p_{i}^{u_i}}\biggr>\vee\biggl<r^{2^\beta \prod\limits_{i=1}^{k}p_i^{v_i}}\biggr>=\biggl<r^{2^\beta \prod\limits_{i=1}^{k}p_i^{\min\{u_i,v_i\}}}\biggr>\le \bar L$. So,  $T\biggl(2^{\alpha-\beta} \prod\limits_{i=1}^{k}p_i^{t_i-\min\{u_i,v_i\}}\biggr)\subseteq \pi_e(\bar L)$ and as $2\in \pi_e(K)\subseteq \pi_e(\bar L)$, we have, in particular, $m\in \pi_e(\bar L)$. So, \begin{equation*}
			\pi_e(L')\subseteq\bigcap_{\mathclap{{\substack{\bar L \in \mathcal{L}(G) \\ \pi_e(H),\pi_e(K)\subseteq \pi_e(\bar L)}}}} \pi_e(\bar L).
		\end{equation*}
		Therefore, $[L']$ is the least upper bound of $\{[H],[K]\}$. \par
		Now, we claim that $[L'']$ is the greatest lower bound of $\{[H],[K]\}$, where $L''=\biggl<r^{2^\alpha \prod\limits_{i=1}^{k}p_i^{\max\{u_i,v_i\}}}\biggr>$. Certainly, $\pi_e(L'')\subseteq \pi_e(H),\pi_e(K)$, as, $L''\le H$ and $\pi_e(L'')=T\biggl(\prod\limits_{i=1}^{k}p_i^{t_i-\max\{u_i,v_i\}}\biggr)\subseteq T\biggl(2^{\alpha-\beta} \prod\limits_{i=1}^{k}p_i^{t_i-v_i}\biggr)\bigcup \{2\}=\pi_e(K)$ and thus, we have,   
		\begin{equation*}
			\pi_e(L'')\subseteq\bigcup_{\mathclap{{\substack{\widehat L \in \mathcal{L}(G) \\ \pi_e(\widehat L)\subseteq\pi_e(H),\pi_e(K)}}}} \pi_e(\widehat L).
		\end{equation*}
		Let $\widehat L\le G$ with $\pi_e(\widehat L)\subseteq \pi_e(H),\pi_e(K)$ and consider, $m\in \pi_e(L)$, then $m\in T\biggl(\prod\limits_{i=1}^{k}p_i^{t_i-{u_i}}\biggr)\bigcap$ $ \biggl\{ T\biggl(2^{\alpha-\beta} \prod\limits_{i=1}^{k}p_i^{t_i-v_i}\biggr)\bigcup \{2\}\biggr\}$ $=T\biggl(\prod\limits_{i=1}^{k}p_i^{t_i-{u_i}}\biggr)\bigcap  T\biggl(2^{\alpha-\beta} \prod\limits_{i=1}^{k}p_i^{t_i-v_i}\biggr)$ and thus $m$ divides $\gcd \biggl( \prod\limits_{i=1}^{k}p_i^{t_i-{u_i}},  2^{\alpha-\beta} \prod\limits_{i=1}^{k}p_i^{t_i-v_i}  \biggr)=\gcd \biggl( \prod\limits_{i=1}^{k}p_i^{t_i-{u_i}},  \prod\limits_{i=1}^{k}p_i^{t_i-v_i}  \biggr)$\\$=\prod\limits_{i=1}^{k}p_i^{\min \{t_i-u_i,t_i-v_i\}}=\prod\limits_{i=1}^{k}p_i^{t_i-\max \{u_i,v_i\}}$. As, $\pi_e(L'')=T\biggl(\prod\limits_{i=1}^{k}p_i^{t_i-\max\{u_i,v_i\}}\biggr)$, we have, in particular, $m\in \pi_e(L'')$. Since $\widehat L$ is arbitrary, we have,  
		\begin{equation*}
			\bigcup_{\mathclap{{\substack{\widehat L \in \mathcal{L}(G) \\ \pi_e(\widehat L)\subseteq\pi_e(H),\pi_e(K)}}}} \pi_e(\widehat L)\subseteq \pi_e(L'').
		\end{equation*}
		Therefore, $[L'']$ is the greatest lower bound of $\{[H],[K]\}$.\vspace{0.2cm}\\
		\textbf{Case 3:} If $2\in \pi_e(H), \pi_e(K)$, then it is sufficient to consider $\{1,2\}\subsetneq\pi_e(H),\pi_e(K)$, as, without the loss of generality, if $\pi_e(H)=\{1,2\}$, then $\pi_e(H)\subseteq\pi(K)$ and thus, $[K]$ is the least upper bound of $\{[H],[K]\}$ and $[H]$ is the greatest lower bound of $\{[H],[K]\}$. So, if $\{1,2\}\subsetneq \pi_e(H),\pi_e(K)$, then $H=\biggl<r^{2^{\beta_1}\prod\limits_{i=1}^{k}p_i^{u_i}}\biggr>$, where $0\le\beta_1\le \alpha-1$ and if $\beta_1=\alpha-1$, then not all $u_i=t_i$ or $H=\biggl<r^{2^{\beta_1}\prod\limits_{i=1}^{k}p_i^{u_i}}, r^{j_1}s\biggr>$, where $0\le \beta_1\le \alpha$ and if $\beta_1$ is $\alpha-1$ or $\alpha$, then not all $u_i=t_i$ and $K=\biggl<r^{2^{\beta_2}\prod\limits_{i=1}^{k}p_i^{v_i}}\biggr>$, where $0\le \beta_2\le \alpha-1$ and if $\beta_2=\alpha-1$, then not all $v_i=t_i$ or $K=\biggl<r^{2^{\beta_2}\prod\limits_{i=1}^{k}p_i^{v_i}}, r^{j_2}s\biggr>$, where, $0\le \beta_2\le \alpha$ and if $\beta_2$ is $\alpha-1$ or $\alpha$, then not all $v_i=w_i$ and $0\le u_i,v_i\le t_i$, $1\le i\le k$, $0\le j_1\le \biggl(2^{\beta_1}\prod\limits_{i=1}^{k}p_i^{u_i}\biggr)-1$, $0\le j_2\le \biggl(2^{\beta_2}\prod\limits_{i=1}^{k}p_i^{v_i}\biggr)-1$. We claim that $[L']$ is the least upper bound of $\{[H],[K]\}$, where $L'=\biggl<r^{2^{\min\{\beta_1,\beta_2\}}\prod\limits_{i=1}^{k}p_i^{\min\{u_i,v_i\}}},s\biggr>$. Certainly, by the choice of $L'$, it is evident that $\pi_e(H),\pi_e(K)\subseteq \pi_e(L')$ and hence, we have,  \begin{equation*}
			\bigcap_{\mathclap{{\substack{\bar L \in \mathcal{L}(G) \\ \pi_e(H),\pi_e(K)\subseteq \pi_e(\bar L)}}}} \pi_e(\bar L) \subseteq \pi_e(L'). 
		\end{equation*} 
		Let $m\in \pi_e(L')$, then $m\in T\biggl(2^{\alpha-\min\{\beta_1,\beta_2\}}\prod\limits_{i=1}^{k}p_i^{t_i-\min\{u_i,v_i\}}\biggr)$. Let $\bar L\le G$ with $\pi_e(H),\pi_e(K)\subseteq\pi_e(\bar L)$, then $\bar L$ contains an element of order $2^{\alpha-\beta_1}\prod\limits_{i=1}^{k}p_i^{t_i-u_i}$, since all elements of order $2^{\alpha-\beta_1}\prod\limits_{i=1}^{k}p_i^{t_i-u_i}$ are in $\biggl<r^{2^{\beta_1}\prod\limits_{i=1}^{k}p_i^{u_i}}\biggr>$ or $\biggl<r^{2^{\beta_1}\prod\limits_{i=1}^{k}p_i^{u_i}},r^js\biggr>$ and generates $\biggl<r^{2^{\beta_1}\prod\limits_{i=1}^{k}p_i^{u_i}}\biggr>$, hence, $\biggl<r^{2^{\beta_1}\prod\limits_{i=1}^{k}p_i^{u_i}}\biggr> \le \bar L$. Moreover, $\bar L$ contains an element of order $2^{\alpha-\beta_2}\prod\limits_{i=1}^{k}p_i^{t_i-v_i}$. We have that all element of order $2^{\alpha-\beta_2}\prod\limits_{i=1}^{k}p_i^{t_i-v_i}$ are in $\biggl<r^{2^{\beta_2} \prod\limits_{i=1}^{k}p_i^{v_i}}\biggr>$ or $\biggl<r^{2^{\beta_2} \prod\limits_{i=1}^{k}p_i^{v_i}},r^js\biggr>$ , and all such elements generate $\biggl<r^{2^{\beta_2} \prod\limits_{i=1}^{k}p_i^{v_i}}\biggr>$. Hence,$\biggl<r^{2^{\beta_2} \prod\limits_{i=1}^{k}p_i^{v_i}}\biggr>\le \bar L$ and therefore, $\biggl<r^{2^{\beta_1}\prod\limits_{i=1}^{k} p_{i}^{u_i}}\biggr>\vee\biggl<r^{2^{\beta_2} \prod\limits_{i=1}^{k}p_i^{v_i}}\biggr>=\biggl<r^{2^{\min\{\beta_1,\beta_2\}} \prod\limits_{i=1}^{k}p_i^{\min\{u_i,v_i\}}}\biggr>\le \bar L$. So, $T\biggl(2^{\alpha-\min\{\beta_1,\beta_2\}} \prod\limits_{i=1}^{k}p_i^{t_i-\min\{u_i,v_i\}}\biggr)$\\$\subseteq \pi_e(\bar L)$ and as $2\in \pi_e(H)\subseteq \pi_e(\bar L)$, we have, in particular, $m\in \pi_e(\bar L)$. So, \begin{equation*}
			\pi_e(L')\subseteq\bigcap_{\mathclap{{\substack{\bar L \in \mathcal{L}(G) \\ \pi_e(H),\pi_e(K)\subseteq \pi_e(\bar L)}}}} \pi_e(\bar L).
		\end{equation*}
		Therefore, $[L']$ is the least upper bound of $\{[H],[K]\}$. \par 
		We claim that $[L'']$ is the greatest lower bound of $\{[H],[K]\}$, where $L''=\biggl<r^{2^{\max\{\beta_1,\beta_2\}}\prod\limits_{i=1}^{k}p_i^{\max\{u_i,v_i\}}}\biggr>$. Certainly, $\pi_e(L'')\subseteq \pi_e(H),\pi_e(K)$ as $\pi_e(L'')=T\biggl(2^{\alpha-\max\{\beta_1,\beta_2\}}\prod\limits_{i=1}^{k}p_i^{t_i-\max\{u_i,v_i\}}\biggr)\bigcup \{2\}\subseteq T\biggl(2^{\alpha-\beta_1}\prod\limits_{i=1}^{k}p_i^{t_i-u_i}\biggr)=\pi_e(H)$ and $\pi_e(L'')=T\biggl(2^{\alpha-\max\{\beta_1,\beta_2\}}\prod\limits_{i=1}^{k}p_i^{t_i-\max\{u_i,v_i\}}\biggr)\bigcup \{2\}\subseteq T\biggl(2^{\alpha-\beta_2}\prod\limits_{i=1}^{k}p_i^{t_i-v_i}\biggr)=\pi_e(K)$ and hence, we have, 
		\begin{equation*}
			\pi_e(L'')\subseteq\bigcup_{\mathclap{{\substack{\widehat L \in \mathcal{L}(G) \\ \pi_e(\widehat L)\subseteq\pi_e(H),\pi_e(K)}}}} \pi_e(\widehat L).
		\end{equation*}
		Let $\widehat L\le G$ with $\pi_e(\widehat L)\subseteq \pi_e(H),\pi_e(K)$ and consider, $m\in \pi_e(\widehat L)$, then $m\in T\biggl(2^{\alpha-\beta_1}\prod\limits_{i=1}^{k}p_i^{t_i-{u_i}}\biggr)\bigcap $\\$  T\biggl(2^{\alpha-{\beta_2}} \prod\limits_{i=1}^{k}p_i^{t_i-v_i}\biggr)$ and thus $m$ divides $\gcd \biggl(2^{\alpha-{\beta_1}} \prod\limits_{i=1}^{k}p_i^{t_i-{u_i}},  2^{\alpha-\beta_2} \prod\limits_{i=1}^{k}p_i^{t_i-v_i}  \biggr)=$\\$2^{\min\{\alpha-\beta_1,\alpha-\beta_2\}}\prod\limits_{i=1}^{k}p_i^{\min \{t_i-u_i,t_i-v_i\}}=2^{\alpha-\max\{\beta_1,\beta_2\}}\prod\limits_{i=1}^{k}p_i^{t_i-\max \{u_i,v_i\}}$. As, $\pi_e(L'')=$\\$T\biggl(2^{\alpha-\max\{\beta_1,\beta_2\}}\prod\limits_{i=1}^{k}p_i^{t_i-\max\{u_i,v_i\}}\biggr)$, we have, in particular, $m\in \pi_e(L'')$. Since $\widehat L$ is arbitrary, we have,  
		\begin{equation*}
			\bigcup_{\mathclap{{\substack{\widehat L \in \mathcal{L}(G) \\ \pi_e(\widehat L)\subseteq\pi_e(H),\pi_e(K)}}}} \pi_e(\widehat L)\subseteq \pi_e(L'').
		\end{equation*}
		Therefore, $[L'']$ is the greatest lower bound of $\{[H],[K]\}$.
	\end{proof}
	\textbf{Examples:}\begin{enumerate}
		\item $\widetilde\Pi(D_{2^\alpha})\cong C_{\alpha+1}$, a chain of length $\alpha+1$.
		\item For distinct odd primes $p_1,p_2$, $\widetilde\Pi(D_{p_1p_2})\cong \mathcal{P}(\{1,2,3\})$, the power set of $\{1,2,3\}$.
		\item For any odd prime $p$, $\widetilde\Pi(D_p)\cong M_2$ and $\widetilde\Pi(D_{p^\alpha})\cong T(p_1^\alpha p_2)$, where $p_1,p_2$ are any distinct primes.
	\end{enumerate}
	The following Theorem gurantees that for $n=\prod\limits_{i=1}^{k}p_i^{t_i}$, where $p_i$'s are distinct odd primes and $t_i's$ are non negative, $\widetilde \Pi (D_n)$ is a distributive lattice.
	\begin{theorem}
		Let $n=\prod\limits_{i=1}^{k}p_i^{t_i}$, where $p_i$'s are distinct odd primes and $t_i's\ge 0$, then $\widetilde \Pi(D_n)\cong T(n) \times C_2 $.
	\end{theorem}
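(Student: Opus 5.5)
We first compute the sets $\pi_e(H)$ for every subgroup $H$ of $D_n$ with $n$ odd, using the listing of Theorem 1.1. A subgroup of Type (1) is $\langle r^d\rangle$ with $d\mid n$. It is cyclic of odd order $n/d$, so
\[
\pi_e(\langle r^d\rangle)=T(n/d),
\]
the full set of divisors of $n/d$.

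A subgroup of Type (2) is $\langle r^d,r^is\rangle$. It contains $\langle r^d\rangle$, and every element outside $\langle r^d\rangle$ is a reflection $r^js$, which has order $2$. Hence
\[
\pi_e(\langle r^d,r^is\rangle)=T(n/d)\cup\{2\}.
\]
Since $n$ is odd, $2\notin T(m)$ for every $m\mid n$. So every class in $\widetilde\Pi(D_n)$ is determined by a pair $(m,\varepsilon)$ with $m\mid n$ and $\varepsilon\in\{0,1\}$. Here $\varepsilon=1$ exactly when $2$ lies in the set of orders. Conversely, every such pair occurs: take $\langle r^{n/m}\rangle$ for $\varepsilon=0$ and $\langle r^{n/m},s\rangle$ for $\varepsilon=1$.

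Next we define $\Psi:\widetilde\Pi(D_n)\to T(n)\times C_2$. We write $C_2=\{0<1\}$ and set $\Psi([H])=(m,\varepsilon)$, where $m=\max\pi_e(H)\setminus\{2\}$. Equivalently, $m$ is the largest odd element of $\pi_e(H)$, which is the order of the rotation subgroup $H\cap\langle r\rangle$. The map is well defined because it depends only on $\pi_e(H)$. It is injective because $\pi_e(H)$ is recovered as $T(m)\cup\{2\}^{\varepsilon}$. It is surjective by the previous paragraph.

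It remains to check that $\Psi$ and $\Psi^{-1}$ are order preserving. We need the equivalence
\[
T(m_1)\cup\{2\}^{\varepsilon_1}\subseteq T(m_2)\cup\{2\}^{\varepsilon_2}\iff m_1\mid m_2 \text{ and } \varepsilon_1\le\varepsilon_2 .
\]
For the forward direction, $m_1$ is odd, so $m_1\in T(m_2)$, which gives $m_1\mid m_2$. Also, if $2$ lies in the left-hand set it must lie in the right-hand set, so $\varepsilon_1\le\varepsilon_2$. The reverse direction is immediate, since $m_1\mid m_2$ gives $T(m_1)\subseteq T(m_2)$. An order isomorphism between lattices is a lattice isomorphism, and $T(n)\times C_2$ is a lattice. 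This recovers Theorem 2.5 in this case and, since $T(n)$ and $C_2$ are distributive, shows that $\widetilde\Pi(D_n)$ is distributive.

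The only point requiring care is the claim that Type (2) subgroups contribute exactly one new order, namely $2$, and never produce odd orders outside $T(n/d)$. This follows because all non-rotation elements of $D_n$ are involutions. The main obstacle is therefore just the bookkeeping that $2\nmid n$ keeps the "$2$-coordinate" independent of the odd part. For comparison, when $n$ is even, $2$ can already appear inside $T(n/d)$ and this product decomposition breaks down.
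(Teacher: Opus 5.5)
Your proof is correct, but it takes a different route from the paper. The paper first counts the classes ($2|T(n)|$ of them) and defines $\phi$ by $[\langle r^{d}\rangle]\mapsto n/d$ and $[\langle r^{d},s\rangle]\mapsto 2n/d$, so it realizes $T(n)\times C_2$ as $T(2n)$. It gets bijectivity from injectivity plus equal cardinalities. It then checks case by case (cyclic/cyclic, cyclic/dihedral, dihedral/dihedral) that $\phi$ sends the joins and meets computed in Theorem 2.5 to lcm and gcd.

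You instead compute $\pi_e$ of every subgroup once and observe that the class sets are exactly $T(m)$ and $T(m)\cup\{2\}$ for $m\mid n$. You then show that inclusion among these sets is exactly the product order. A bijection that preserves and reflects order onto a lattice is automatically a lattice isomorphism, so you never need to compute joins or meets in $\widetilde\Pi(D_n)$.

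Your route buys several things:
\begin{itemize}
\item It is independent of the long and somewhat error-prone case analysis of Theorem 2.5. For instance, the meet in its Case 3 is written as a cyclic group whose $\pi_e$ is then claimed to contain $2$.
\item The lattice property for odd $n$ becomes a consequence rather than an input.
\item It replaces the counting step with explicit representatives $\langle r^{n/m}\rangle$ and $\langle r^{n/m},s\rangle$.
\item It is shorter.
\end{itemize}
The paper's route buys explicit join and meet formulas, which it reuses in Theorems 2.8 and 2.9. However, these also drop out of your isomorphism immediately via lcm and gcd under $(m,\varepsilon)\mapsto 2^{\varepsilon}m$.
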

	\begin{proof}
		We first note that, $|T(n)|=\prod\limits_{i=1}^{k}(t_i+1)$ and therefore, $|T(n)\times C_2|=2\prod\limits_{i=1}^{k}(t_i+1)$. As each Type (1) subgroups belongs to distinct classes in $\widetilde\Pi(D_n)$ and $2\notin \pi_e(H)$, for all Type (1) subgroup $H$, so the number of distinct classes containing Type (2) subgroups is same as the number of distinct Type (1) subgroups and hence, $|\widetilde \Pi(D_n)|=2|\text{Number of Type (1) subgroups}|=2|T(n)|=2\prod\limits_{i=1}^{k}(t_i+1)$. Therefore, $|\widetilde \Pi(D_n)|=|T(n)\times C_2|$. \par Consider the map $\phi: \widetilde \Pi(D_n)\to T(n)\times C_2$, given by, 
		\begin{equation*}
			\biggl[\biggl<r^{\prod\limits_{i=1}^{k}p_i^{u_i}}\biggr>\biggr] \longmapsto\ \prod\limits_{i=1}^{k}p_i^{t_i-u_i} \hspace{0.2cm}\text{and \hspace{0.2cm}} \biggl[\biggl<r^{\prod\limits_{i=1}^{k}p_i^{u_i}},s\biggr>\biggr] \longmapsto\ 2\prod\limits_{i=1}^{k}p_i^{t_i-u_i}.
		\end{equation*}
		If $\phi\biggl(  \biggl[\biggl<r^{\prod\limits_{i=1}^{k}p_i^{u_i}}\biggr>\biggr]\biggr) =\phi\biggl(  \biggl[\biggl<r^{\prod\limits_{i=1}^{k}p_i^{v_i}}\biggr>\biggr]\biggr) $, then by definition of $\phi$, $\prod\limits_{i=1}^{k}p_i^{t_i-u_i}=\prod\limits_{i=1}^{k}p_i^{t_i-v_i}$, which implies that $\prod\limits_{i=1}^{k}p_i^{u_i-v_i}=1$, so $u_i=v_i$, for all $i$, $1\le i\le k$ and therefore, $\biggl[\biggl<r^{\prod\limits_{i=1}^{k}p_i^{u_i}}\biggr>\biggr]=\biggl[\biggl<r^{\prod\limits_{i=1}^{k}p_i^{v_i}}\biggr>\biggr]$. Similarly, if $\phi\biggl(  \biggl[\biggl<r^{\prod\limits_{i=1}^{k}p_i^{u_i}},s\biggr>\biggr]\biggr) =\phi\biggl(  \biggl[\biggl<r^{\prod\limits_{i=1}^{k}p_i^{v_i}},s\biggr>\biggr]\biggr) $, then by definition of $\phi$, $2\prod\limits_{i=1}^{k}p_i^{t_i-u_i}=2\prod\limits_{i=1}^{k}p_i^{t_i-v_i}$, which implies that $\prod\limits_{i=1}^{k}p_i^{u_i-v_i}=1$, so $u_i=v_i$, for all $i$, $1\le i\le k$ and therefore, $\biggl[\biggl<r^{\prod\limits_{i=1}^{k}p_i^{u_i}},s\biggr>\biggr]=\biggl[\biggl<r^{\prod\limits_{i=1}^{k}p_i^{v_i}},s\biggr>\biggr]$, which implies that $\phi$ is an injective map. Since, $|\widetilde\Pi(D_n)|=|T(n)\times C_2|$, we have, $\phi$ is a bijective map.\par
		We now show that $\phi$ is a lattice homomorphism, i.e., $\phi([H]\vee'[K])=\text{lcm}(\phi([H]),\phi([K]))$ and $\phi([H]\wedge'[K])=\gcd(\phi([H]),\phi([K]))$, for all $[H],[K]\in \widetilde \Pi(D_n)$. So, consider the following cases:\vspace{0.2cm}\\
		\textbf{Case 1:} If both $H$ and $K$ are of Type (1) subgroups, then 
		$[H]=\biggl[\biggl< r^{\prod\limits_{i=1}^{k}p_i^{u_i}}\biggr>\biggr]$ and $[K]=\biggl[\biggl< r^{\prod\limits_{i=1}^{k}p_i^{v_i}}\biggr>\biggr]$, $0\le u_i,v_i\le t_i$, $1\le i\le k$. By Theorem 2.5 and by the definition of $\phi$, we have, $\phi([H]\vee'[K])=\phi\biggl(\biggl[\biggl<r^{\prod\limits_{i=1}^{k}p_i^{\min \{u_i,v_i\}}}\biggr>\biggr]\biggr)=\prod\limits_{i=1}^{k}p_i^{t_i-\min\{u_i,v_i\}}=\prod\limits_{i=1}^{k}p_i^{\max\{t_i-u_i,t_i-v_i\}}=\text{lcm}\biggl(\prod\limits_{i=1}^{k}p_i^{t_i-u_i},\prod\limits_{i=1}^{k}p_i^{t_i-v_i}\biggr)=\text{lcm}(\phi([H]),\phi([K])).$ \par Similarly, $\phi([H]\wedge'[K])=\phi\biggl(\biggl[\biggl<r^{\prod\limits_{i=1}^{k}p_i^{\max \{u_i,v_i\}}}\biggr>\biggr]\biggr)=\prod\limits_{i=1}^{k}p_i^{t_i-\max\{u_i,v_i\}}=\prod\limits_{i=1}^{k}p_i^{\min\{t_i-u_i,t_i-v_i\}}=\text{gcd}\biggl(\prod\limits_{i=1}^{k}p_i^{t_i-u_i},\prod\limits_{i=1}^{k}p_i^{t_i-v_i}\biggr)=\text{gcd}(\phi([H]),\phi([K])).$ \vspace{0.2cm}\\
		\textbf{Case 2:} If $H$ is of Type (1) and $K$ is of Type (2), then $[H]=\biggl[\biggl< r^{\prod\limits_{i=1}^{k}p_i^{u_i}}\biggr>\biggr]$ and $[K]=\biggl[\biggl< r^{\prod\limits_{i=1}^{k}p_i^{v_i}},s\biggr>\biggr]$, $0\le u_i,v_i\le t_i$, $1\le i\le k$. By Theorem 2.5 and by the definition of $\phi$, we have, $\phi([H]\vee'[K])=\phi\biggl(\biggl[\biggl<r^{\prod\limits_{i=1}^{k}p_i^{\min \{u_i,v_i\}}},s\biggr>\biggr]\biggr)=2\prod\limits_{i=1}^{k}p_i^{t_i-\min\{u_i,v_i\}}=2\prod\limits_{i=1}^{k}p_i^{\max\{t_i-u_i,t_i-v_i\}}=\text{lcm}\biggl(\prod\limits_{i=1}^{k}p_i^{t_i-u_i},2\prod\limits_{i=1}^{k}p_i^{t_i-v_i}\biggr)=\text{lcm}(\phi([H]),\phi([K])).$ \par Similarly, $\phi([H]\wedge'[K])=\phi\biggl(\biggl[\biggl<r^{\prod\limits_{i=1}^{k}p_i^{\max \{u_i,v_i\}}}\biggr>\biggr]\biggr)=\prod\limits_{i=1}^{k}p_i^{t_i-\max\{u_i,v_i\}}=\prod\limits_{i=1}^{k}p_i^{\min\{t_i-u_i,t_i-v_i\}}=\text{gcd}\biggl(\prod\limits_{i=1}^{k}p_i^{t_i-u_i},2\prod\limits_{i=1}^{k}p_i^{t_i-v_i}\biggr)=\text{gcd}(\phi([H]),\phi([K])).$ \vspace{0.2cm}\\
		\textbf{Case 3:} If both $H$ and $K$ are of Type (2) subgroups, then 
		$[H]=\biggl[\biggl< r^{\prod\limits_{i=1}^{k}p_i^{u_i}},s\biggr>\biggr]$ and $[K]=\biggl[\biggl< r^{\prod\limits_{i=1}^{k}p_i^{v_i}},s\biggr>\biggr]$, $0\le u_i,v_i\le t_i$, $1\le i\le k$. By Theorem 2.5 and by the definition of $\phi$, we have, $\phi([H]\vee'[K])=\phi\biggl(\biggl[\biggl<r^{\prod\limits_{i=1}^{k}p_i^{\min \{u_i,v_i\}}},s\biggr>\biggr]\biggr)=2\prod\limits_{i=1}^{k}p_i^{t_i-\min\{u_i,v_i\}}=2\prod\limits_{i=1}^{k}p_i^{\max\{t_i-u_i,t_i-v_i\}}=\text{lcm}\biggl(2\prod\limits_{i=1}^{k}p_i^{t_i-u_i},2\prod\limits_{i=1}^{k}p_i^{t_i-v_i}\biggr)=\text{lcm}(\phi([H]),\phi([K])).$ \par Similarly, $\phi([H]\wedge'[K])=\phi\biggl(\biggl[\biggl<r^{\prod\limits_{i=1}^{k}p_i^{\max \{u_i,v_i\}}},s\biggr>\biggr]\biggr)=2\prod\limits_{i=1}^{k}p_i^{t_i-\max\{u_i,v_i\}}=2\prod\limits_{i=1}^{k}p_i^{\min\{t_i-u_i,t_i-v_i\}}=\text{gcd}\biggl(2\prod\limits_{i=1}^{k}p_i^{t_i-u_i},2\prod\limits_{i=1}^{k}p_i^{t_i-v_i}\biggr)=\text{gcd}(\phi([H]),\phi([K])).$
	\end{proof}
	In order to characterize all values of $n$ for which $\widetilde{\Pi}(G)$ is a distributive lattice, the following characterization theorem due to Birkhoff for distributive lattices is needed.
	\begin{theorem} (\cite{birkhoff1940lattice})
		A lattice is distributive if and only if it does not contain a sublattice isomorphic to a pentagon lattice $(N_5)$ or a diamond lattice $(M_3)$.
	\end{theorem}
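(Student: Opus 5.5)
This is Birkhoff's $M_3$--$N_5$ theorem, cited from \cite{birkhoff1940lattice}. I would prove both directions directly.

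\emph{Necessity.} Distributivity passes to sublattices, since the distributive identities only involve the meet and join of the sublattice. It therefore suffices to check that $N_5$ and $M_3$ are not distributive.
\begin{itemize}
\item In $N_5=\{0<a<c<1,\ 0<b<1\}$ we have $a\vee(b\wedge c)=a$, while $(a\vee b)\wedge c=c$.
\item In $M_3=\{0,x,y,z,1\}$ with $x,y,z$ pairwise incomparable we have $x\wedge(y\vee z)=x$, while $(x\wedge y)\vee(x\wedge z)=0$.
\end{itemize}
In both cases the distributive law fails.

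\emph{Sufficiency.} I would argue in two steps: first show that excluding $N_5$ forces modularity, then that excluding $M_3$ in a modular lattice forces distributivity.

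For the first step, suppose $L$ is not modular. Then there are $a<c$ and $b$ with $a\vee(b\wedge c)<(a\vee b)\wedge c$. Put $a_1=a\vee(b\wedge c)$ and $c_1=(a\vee b)\wedge c$. Routine absorption computations give $a_1\vee b=a\vee b=c_1\vee b$ and $a_1\wedge b=b\wedge c=c_1\wedge b$. Hence $\{b\wedge c,\ a_1,\ c_1,\ b,\ a\vee b\}$ is a sublattice isomorphic to $N_5$. One also checks that $b$ is incomparable to $a_1$ and $c_1$, since otherwise the strict inequality $a_1<c_1$ would collapse.

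For the second step, suppose $L$ is modular but not distributive, and pick $x,y,z$ violating distributivity. Set
\[
d=(x\wedge y)\vee(y\wedge z)\vee(z\wedge x),\qquad u=(x\vee y)\wedge(y\vee z)\wedge(z\vee x),
\]
and define
\[
x_1=(x\wedge u)\vee d,\qquad y_1=(y\wedge u)\vee d,\qquad z_1=(z\wedge u)\vee d.
\]
Using modularity, one verifies that $x_1\wedge y_1=y_1\wedge z_1=z_1\wedge x_1=d$ and $x_1\vee y_1=y_1\vee z_1=z_1\vee x_1=u$. Moreover $d<u$, because $d=u$ would force distributivity for the triple $x,y,z$. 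These facts make $\{d,x_1,y_1,z_1,u\}$ a copy of $M_3$.

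The main obstacle is this last verification. The key identity to establish is $x_1\wedge y_1=d$, obtained by repeatedly applying the modular law $p\le r\Rightarrow p\vee(q\wedge r)=(p\vee q)\wedge r$ after first rewriting $x_1=(x\vee d)\wedge u$ (valid since $d\le u$). The join identities then follow dually. The remaining case checks are routine.
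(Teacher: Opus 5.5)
Your proof is correct. The paper gives no proof of this statement: it only cites Birkhoff and uses the result as a black box. Your argument is the standard self-contained proof, namely Dedekind's observation that a non-modular lattice contains $N_5$, followed by the construction of $M_3$ from $d$ and $u$ in a modular non-distributive lattice.

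The places you leave as routine do go through.
\begin{itemize}
\item \emph{The claim $d<u$.} In any lattice $x\wedge u=x\wedge(y\vee z)$. Since $(x\wedge y)\vee(x\wedge z)\le x$, modularity gives $x\wedge d=(x\wedge y)\vee(x\wedge z)$. Dually, $x\vee d=x\vee(y\wedge z)$ and, by modularity, $x\vee u=(x\vee y)\wedge(x\vee z)$. Because $d$ and $u$ are symmetric in $x,y,z$, the equality $d=u$ would yield every instance of both distributive laws for the triple. So whichever law you chose to violate forces $d<u$.
\item \emph{Distinctness of the five elements.} If $x_1=y_1$, then $d=x_1\wedge y_1=x_1=x_1\vee y_1=u$. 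If $x_1=d$, then $y_1=x_1\vee y_1=u$ and likewise $z_1=u$, so $d=y_1\wedge z_1=u$. The case $x_1=u$ is dual.
\end{itemize}

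One further remark. Your $N_5$ computation is a failure of the modular law, since you have $a\le c$. Together with your first sufficiency step, it gives Dedekind's criterion: a lattice is modular if and only if it has no sublattice isomorphic to $N_5$. The paper tacitly needs at least the easy half of this, that $N_5$ is not modular, when it passes from distributivity to modularity in its final theorem and closing remark.
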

\begin{figure}[h]
	\centering
	\begin{tikzpicture}[
		every node/.style={circle,fill=black,inner sep=0pt,minimum size=6pt},
		]
		
		\begin{scope}[xshift=0cm]
			
			\node (zero) at (0,0) {};
			\node (a)    at (-1,1) {};
			\node (b)    at (-1,2) {};
			\node (c)    at (1.2,1.5) {};
			\node (one)  at (0,3) {};
			
			\draw (zero) -- (a) -- (b) -- (one);
			\draw (zero) -- (c) -- (one);
			
			\node[draw=none, fill=none, font=\large] at (0,-1.2) {$N_5$};
			
		\end{scope}
		
		\begin{scope}[xshift=7cm]
			
			\node (zero2) at (0,0) {};
			\node (x)     at (-1.5,1.5) {};
			\node (y)     at (0,1.5) {};
			\node (z)     at (1.5,1.5) {};
			\node (one2)  at (0,3) {};
			
			\draw (zero2) -- (x) -- (one2);
			\draw (zero2) -- (y) -- (one2);
			\draw (zero2) -- (z) -- (one2);
			
			\node[draw=none, fill=none, font=\large] at (0,-1.2) {$M_3$};
			
		\end{scope}
		
	\end{tikzpicture}
	
	\caption{}
	\label{fig:lattices}
	
\end{figure}
	\begin{theorem}
		For all $n$, the lattice $\widetilde\Pi(D_n)$ does not contain a sublattice isomorphic to $M_3$.  
	\end{theorem}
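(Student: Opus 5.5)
The plan is to embed $\widetilde\Pi(D_n)$, as a lattice, into a distributive lattice. Every sublattice of a distributive lattice is distributive, and $M_3$ is not distributive, so no copy of $M_3$ can occur. Write $n=2^\alpha m$ with $m$ odd.

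\textbf{Step 1: the possible element-order sets.} From Theorem 1.1 and the case analysis in the proof of Theorem 2.5, every subgroup $H\le D_n$ has $\pi_e(H)$ of one of two forms:
\begin{itemize}
\item $\pi_e(H)=T(d)$ for some $d\mid n$, coming from $\langle r^{n/d}\rangle$, or from a dihedral subgroup when $d$ is even;
\item $\pi_e(H)=T(d)\cup\{2\}$ for some odd $d\mid m$, coming from $\langle r^{n/d},r^js\rangle$.
\end{itemize}
Since $2\in T(d)$ whenever $d$ is even, every class is determined by the pair
\[
\psi([H])=\bigl(\operatorname{lcm}\pi_e(H),\ \varepsilon(H)\bigr)\in T(n)\times C_2 ,
\]
where $\varepsilon(H)=1$ if $2\in\pi_e(H)$ and $0$ otherwise. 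Thus $\psi$ is injective. It is also order-preserving and order-reflecting: whether $T(d)\subseteq T(d')$, $T(d)\subseteq T(d')\cup\{2\}$, and so on, is decided by divisibility of the lcm's together with the flag.

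\textbf{Step 2: $\psi$ preserves joins and meets.} I would use the explicit joins and meets computed in Theorem 2.5.
\begin{itemize}
\item The meet of two classes always has $\pi_e$ equal to $\pi_e(H)\cap\pi_e(K)$. For the possible forms this is $T(\gcd(d,d'))$, possibly with $2$ adjoined. Its lcm is $\gcd$ of the two lcm's and its flag is the product of the two flags. For example, $T(4)\wedge'(T(3)\cup\{2\})=\{1,2\}$ maps to $(2,1)=(\gcd(4,6),1)$.
\item The join has $\pi_e=T(\operatorname{lcm}(d,d'))$, with $\{2\}$ adjoined exactly when one of the two sets contains $2$. Its image is therefore $(\operatorname{lcm},\max)$ of the two images. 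Here $T(n)\times C_2$ carries the order of divisibility in the first coordinate and the usual order in the second.
\end{itemize}
I expect this bookkeeping to be the main obstacle. The redundancy of the flag when the lcm is even must be handled uniformly. So I would check the three cases of Theorem 2.5 separately: no $2$ in either set, $2$ in exactly one set, and $2$ in both sets. In each case I would verify that the formulas for $L'$ and $L''$ given there map to $(\operatorname{lcm},\max)$ and $(\gcd,\min)$ respectively.

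\textbf{Step 3: conclude.} Once Step 2 is done, $\psi$ is a lattice embedding into $T(n)\times C_2$. This target is distributive, being a product of distributive lattices: $T(n)$ is a product of chains. Hence, by Theorem 2.7, it has no sublattice isomorphic to $M_3$, and neither does $\widetilde\Pi(D_n)$.

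\textbf{Fallback if the embedding fails.} If Step 2 breaks in some edge case (say $\alpha=1$, where $T(2)=\{1,2\}$ coincides with the dihedral order set $T(1)\cup\{2\}$), I would switch to a direct argument. Suppose $a,b,c$ form an $M_3$ with bottom $z$ and top $u$. Meets are intersections of the sets $\pi_e$, so $\pi_e(a)\cap\pi_e(b)=\pi_e(z)$ and similarly for the other pairs.
\begin{itemize}
\item First, at most one of $a,b,c$ can differ from $z$ only through the element $2$, since any two such would both contain $2$ while their meet $z$ does not.
\item For the remaining atoms, compare the odd and $2$-parts of their lcm's using the prime-wise max/min descriptions of the joins.
\end{itemize}
These two observations should force two of the atoms to be comparable, contradicting the $M_3$ structure.
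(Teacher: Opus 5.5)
The gap is in Step 1: $\psi$ is not injective once $n$ is even and not a power of $2$. Take an odd divisor $d>1$ of such an $n$. The cyclic subgroup $\langle r^{n/(2d)}\rangle$ has $\pi_e=T(2d)$, and the dihedral subgroup $\langle r^{n/d},s\rangle$ has $\pi_e=T(d)\cup\{2\}$. These sets differ, since $2p\in T(2d)$ for any prime $p\mid d$. Yet both map to $(2d,1)$, because $\operatorname{lcm}(T(d)\cup\{2\})=2d$ is itself even. So $\psi$ neither injects nor reflects order. The coincidence $T(2)=T(1)\cup\{2\}$ you single out is an equality of sets, not a collision of classes.

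No other embedding can rescue the plan either. For $n=4p$, the classes with element-order sets $\{1,2\}\subset\{1,2,p\}\subset\{1,2,p,2p\}\subset T(4p)$, together with $\{1,2,4\}$, form the $N_5$ of Theorem 2.9. So $\widetilde\Pi(D_n)$ is in general not distributive and is not a sublattice of any distributive lattice; your strategy would prove too much.

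Your fallback is only a sketch, and the same example shows what it must respect. Put $x=\{1,2,4\}$, $y=\{1,2,p\}$, $y'=\{1,2,p,2p\}$. Then $x\vee' y=x\vee' y'$ and $x\wedge' y=x\wedge' y'$, although $y\neq y'$. So an argument that fixes one atom and compares the other two only through their joins and meets with it cannot succeed; all three pairs of the $M_3$ must be used. This configuration falls under Case 3 of the paper's proof, with $H=\langle r^{p},s\rangle$, $K=\langle r^{4},s\rangle$, $L=\langle r^{2},s\rangle$. There the paper's inference $\max\{\beta_1,\beta_2\}=\max\{\beta_1,\beta_3\}$ does not hold: both meets equal $\{1,2\}$, but they are written with $2$-exponents $\alpha$ and $\alpha-1$.

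What survives is Step 2, and it is enough. Your image formulas there are correct: $\psi$ is a lattice homomorphism into $T(n)\times C_2$. When $n$ is odd, use $T(2n)\times C_2$ as the target; there $\psi$ is even injective, in line with Theorem 2.6. Since $\operatorname{lcm}T(e)=e$ and $\operatorname{lcm}(T(d)\cup\{2\})=2d$, each fiber of $\psi$ contains at most the two classes $T(2d)$ and $T(d)\cup\{2\}$. Now suppose $z,a,b,c,u$ formed an $M_3$. Distributivity of the target would give
\[
\psi(a)=\psi(a)\wedge\bigl(\psi(b)\vee\psi(c)\bigr)=\bigl(\psi(a)\wedge\psi(b)\bigr)\vee\bigl(\psi(a)\wedge\psi(c)\bigr)=\psi(z).
\]
Likewise $\psi(b)=\psi(c)=\psi(z)$, whence $\psi(u)=\psi(a)\vee\psi(b)=\psi(z)$. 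That puts five distinct classes in one fiber, which is impossible.

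Compared with the paper's approach, which splits on whether $2\in\pi_e$ and reads off exponents from Theorem 2.5, this route needs only the homomorphism check. It also sidesteps the non-uniqueness of those exponents. Finally, it shows that any failure of distributivity must involve classes identified by $\psi$, which is exactly where the $N_5$ above lives.
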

	\begin{proof}
		For $n=1$, $\widetilde\Pi(D_n)\cong C_2$ and for $n=2^k$, $\widetilde\Pi(D_n)\cong C_{k+1}$, so the result holds in these cases. So, suppose $n=2^{\alpha}\prod\limits_{i=1}^{k}p_i^{t_i}$, where $p_i$'s are distinct odd primes, $\alpha$ is a non-negative integer and not all $t_i$'s are zero. On the contrary, if there exists a sublattice isomorphic to $M_3$, then there exist distinct $[H],[K],[L],[H]\vee'[K],[H]\wedge'[K]$ with $[H]\vee'[K]=[H]\vee'[L]=[K]\vee'[L]$ and $[H]\wedge'[K]=[H]\wedge'[L]=[K]\wedge'[L]$.
	%
				

		Now, consider the following cases:\\
		\textbf{Case 1:} If $2\notin \pi_e(H), \pi_e(K)$, then $H=\biggl<r^{2^\alpha\prod\limits_{i=1}^{k} p_{i}^{u_i}}\biggr>$ and $K=\biggl<r^{2^\alpha\prod\limits_{i=1}^{k} p_{i}^{v_i}}\biggr>$, where $0\le u_i,v_i\le t_i$ and $1\le i\le k$. By Theorem 2.5, $[H]\vee'[K]=\biggl[\biggl<r^{2^\alpha\prod\limits_{i=1}^{k} p_{i}^{\min\{u_i,v_i\}}}\biggr>\biggr]$ and $[H]\wedge'[K]=\biggl[\biggl<r^{2^\alpha\prod\limits_{i=1}^{k} p_{i}^{\max\{u_i,v_i\}}}\biggr>\biggr]$. As, $[H]\vee'[K]=[H]\vee'[L]$ and $2\notin \pi_e\biggl(\biggl<r^{2^\alpha\prod\limits_{i=1}^{k} p_{i}^{\min\{u_i,v_i\}}}\biggr>\biggr) $, therefore $2\notin \pi_e(L)$. So, $L=\biggl<r^{2^\alpha\prod\limits_{i=1}^{k} p_{i}^{w_i}}\biggr>$, for $0\le w_i\le t_i$, $1\le i\le k$. By Theorem 2.5, $[H]\vee'[L]=\biggl[\biggl<r^{2^\alpha\prod\limits_{i=1}^{k} p_{i}^{\min\{u_i,w_i\}}}\biggr>\biggr]$ and $[H]\wedge'[L]=\biggl[\biggl<r^{2^\alpha\prod\limits_{i=1}^{k} p_{i}^{\max\{u_i,w_i\}}}\biggr>\biggr]$. As $[H]\vee'[K]=[H]\vee'[L]$, so $\biggl[\biggl<r^{2^\alpha\prod\limits_{i=1}^{k} p_{i}^{\min\{u_i,v_i\}}}\biggr>\biggr]=\biggl[\biggl<r^{2^\alpha\prod\limits_{i=1}^{k} p_{i}^{\min\{u_i,w_i\}}}\biggr>\biggr]$ and hence, $\pi_e\biggl(\biggl<r^{2^\alpha\prod\limits_{i=1}^{k} p_{i}^{\min\{u_i,v_i\}}}\biggr>\biggr)=\pi_e\biggl(\biggl<r^{2^\alpha\prod\limits_{i=1}^{k} p_{i}^{\min\{u_i,w_i\}}}\biggr>\biggr)$, i.e., $T\biggl(\prod\limits_{i=1}^{k} {p_{i}^{t_i-\min\{u_i,v_i\}}}\biggr)=T\biggl({\prod\limits_{i=1}^{k} p_{i}^{t_i-\min\{u_i,w_i\}}}\biggr)$. Thus, in particular, $\prod\limits_{i=1}^{k} {p_{i}^{t_i-\min\{u_i,v_i\}}}$ divides $\prod\limits_{i=1}^{k} {p_{i}^{t_i-\min\{u_i,w_i\}}}$ and $\prod\limits_{i=1}^{k} {p_{i}^{t_i-\min\{u_i,w_i\}}}$ divides $\prod\limits_{i=1}^{k} {p_{i}^{t_i-\min\{u_i,v_i\}}}$ and therefore, $\prod\limits_{i=1}^{k} {p_{i}^{t_i-\min\{u_i,v_i\}}}=\prod\limits_{i=1}^{k} {p_{i}^{t_i-\min\{u_i,w_i\}}}$, and consequently, $\min \{u_i,v_i\}=\min \{u_i,w_i\}$, for all $1\le i\le k$. Similarly, as $[H]\wedge'[K]=[H]\wedge'[L]$, so $\biggl[\biggl<r^{2^\alpha\prod\limits_{i=1}^{k} p_{i}^{\max\{u_i,v_i\}}}\biggr>\biggr]=\biggl[\biggl<r^{2^\alpha\prod\limits_{i=1}^{k} p_{i}^{\max\{u_i,w_i\}}}\biggr>\biggr]$, and hence, $\pi_e \biggl(\biggl<r^{2^\alpha\prod\limits_{i=1}^{k} p_{i}^{\max\{u_i,v_i\}}}\biggr>\biggr)= \pi_e \biggl(\biggl<r^{2^\alpha\prod\limits_{i=1}^{k} p_{i}^{\max\{u_i,w_i\}}}\biggr>\biggr)$, i.e., $T\biggl(\prod\limits_{i=1}^{k} {p_{i}^{t_i-\max\{u_i,v_i\}}}\biggr)=T\biggl({\prod\limits_{i=1}^{k} p_{i}^{t_i-\max\{u_i,w_i\}}}\biggr)$. Thus, in particular, $\prod\limits_{i=1}^{k} {p_{i}^{t_i-\max\{u_i,v_i\}}}$ divides $\prod\limits_{i=1}^{k} {p_{i}^{t_i-\max\{u_i,w_i\}}}$ and $\prod\limits_{i=1}^{k} {p_{i}^{t_i-\max\{u_i,w_i\}}}$ divides $\prod\limits_{i=1}^{k} {p_{i}^{t_i-\max\{u_i,v_i\}}}$ and therefore, $\prod\limits_{i=1}^{k} {p_{i}^{t_i-\max\{u_i,v_i\}}}=\prod\limits_{i=1}^{k} {p_{i}^{t_i-\max\{u_i,w_i\}}}$, and consequently, $\max \{u_i,v_i\}=\max \{u_i,w_i\}$, for all $1\le i\le k$. Therefore, $v_i=w_i$ for all $1\le i\le k$ and hence $[K]=[L]$, a contradiction.\vspace{0.1cm}\\
		\textbf{Case 2:} If $2 \notin \pi_e(H)$, but $2\in \pi_e(K)$, then clearly, $H=\biggl<r^{2^\alpha\prod\limits_{i=1}^{k} p_{i}^{u_i}}\biggr>$ for $0\le u_i\le t_i, 1\le i\le k$.\\
		\textbf{Subcase 2.1:} If $\pi_e(K)=\{1,2\}$, then by Theorem 2.5, $[H]\vee'[K]=\biggl[\biggl<r^{2^\alpha\prod\limits_{i=1}^{k} p_{i}^{u_i}},s\biggr>\biggr]$ and $[H]\wedge'[K]=[\bigl<e\bigr>]$. Certainly, $2\in \pi_e(L)$, else if $2\notin \pi_e(L)$, then by Theorem 2.5, $2\notin \pi_e(\widetilde T)$, for all $\widetilde T\in [H]\vee'[K]$, which is absurd as $2\in \pi_e\biggl( \biggl<r^{2^\alpha\prod\limits_{i=1}^{k} p_{i}^{u_i}},s\biggr>\biggr)$. So, $\pi_e(K)\subseteq \pi_e(L)$ and hence $[K]\lesssim [L]$, a contradiction.\\
		\textbf{Subcase 2.2:} If $\{1,2\}\subsetneq \pi_e(K)$, then $[K]$ is $\biggl[\biggl<r^{2^\beta\prod\limits_{i=1}^{k}p_i^{v_i}},s\biggr>\biggr]$, $0\le \beta\le \alpha$ and if $\beta$ is $\alpha-1$ or $\alpha$, then not all $v_i=t_i$. By Theorem 2.5, $[H]\vee'[K]=\biggl[\biggl<r^{2^\beta\prod\limits_{i=1}^{k}p_i^{\min\{u_i,v_i\}}},s\biggr>\biggr]$ and $[H]\wedge'[K]=\biggl[\biggl<r^{2^\alpha\prod\limits_{i=1}^{k}p_i^{\max\{u_i,v_i\}}}\biggr>\biggr]$. Now, if $2\in \pi_e(L)$, then $\pi_e(\bigl<s\bigr>)\subseteq \pi_e(K), \pi_e(L)$ and hence $[\bigl<s\bigr>]\lesssim [K]\wedge [L]$, so $2\in \widetilde T$ for all $\widetilde T\in [K]\wedge'[L]=[H]\wedge'[K]$, but this is not possible as $2\notin \pi_e\biggl(\biggl<r^{2^\alpha\prod\limits_{i=1}^{k}p_i^{\max\{u_i,v_i\}}}\biggr>\biggr)$. So, $2\notin \pi_e(L)$ and thus $L=\biggl<r^{2^\alpha\prod\limits_{i=1}^{k} p_{i}^{w_i}}\biggr>$, so by Theorem 2.5, $2\notin \widetilde T$ for any $\widetilde T\in [H]\vee'[L]=[H]\vee'[K]$, which is a contradiction.\vspace{0.2cm}\\
		\textbf{Case 3:} If $2\in \pi_e(H),\pi_e(K)$, then $\{1,2\}\subsetneq \pi_e(H),\pi_e(K)$, else if $\pi_e(H)$ or $\pi_e(K)$ is $\{1,2\}$, then $[H]$ and $[K]$ are comparable. So, $[H]$ is $\biggl[\biggl<r^{2^{\beta_1}\prod\limits_{i=1}^{k}p_i^{u_i}},s\biggr>\biggr]$, and $[K]$ is $\biggl[\biggl<r^{2^{\beta_2}\prod\limits_{i=1}^{k}p_i^{v_i}},s\biggr>\biggr]$, $0\le \beta_1,\beta_2\le \alpha$ and if $\beta_1, \beta_2$ are $\alpha-1$ or $\alpha$, then not all $u_i$'s and $v_i$'s are $t_i$. By Theorem 2.5, $[H]\vee'[K]=\biggl[\biggl<r^{2^{\min\{\beta_1,\beta_2\}}\prod\limits_{i=1}^{k}p_i^{\min\{u_i,v_i\}}},s\biggr>\biggr]$ and $[H]\wedge'[K]=\biggl[\biggl<r^{2^{\max\{\beta_1,\beta_2\}}\prod\limits_{i=1}^{k}p_i^{\max\{u_i,v_i\}}},s\biggr>\biggr]$, so certainly $\{1,2\}\subsetneq \pi_e(L) $ and hence $[L]=\biggl[\biggl<r^{2^{\beta_3}\prod\limits_{i=1}^{k}p_i^{w_i}},s\biggr>\biggr]$ and by Theorem 2.5, $[H]\vee'[L]=\biggl[\biggl<r^{2^{\min\{\beta_1,\beta_3\}}\prod\limits_{i=1}^{k}p_i^{\min\{u_i,w_i\}}},s\biggr>\biggr]$ and $[H]\wedge'[L]=\biggl[\biggl<r^{2^{\max\{\beta_1,\beta_3\}}\prod\limits_{i=1}^{k}p_i^{\max\{u_i,w_i\}}},s\biggr>\biggr]$. As, $[H]\vee'[K]=[H]\vee'[L]$, we have, $\biggl[\biggl<r^{2^{\min\{\beta_1,\beta_2\}}\prod\limits_{i=1}^{k}p_i^{\min\{u_i,v_i\}}},s\biggr>\biggr]=\biggl[\biggl<r^{2^{\min\{\beta_1,\beta_3\}}\prod\limits_{i=1}^{k}p_i^{\min\{u_i,w_i\}}},s\biggr>\biggr]$, which implies that $T\biggl(2^{\alpha-\min\{\beta_1,\beta_2\}}\prod\limits_{i=1}^{k}p_i^{t_i-\min\{u_i,v_i\}}\biggr)\cup\{2\}$\\$=T\biggl(2^{\alpha-\min\{\beta_1,\beta_3\}}\prod\limits_{i=1}^{k}p_i^{t_i-\min\{u_i,w_i\}}\biggr)\cup\{2\}$. So, $2^{\alpha-\min\{\beta_1,\beta_2\}}\prod\limits_{i=1}^{k}p_i^{t_i-\min\{u_i,v_i\}}$ divides $2^{\alpha-\min\{\beta_1,\beta_3\}}\prod\limits_{i=1}^{k}p_i^{t_i-\min\{u_i,w_i\}}$ and vice versa, therefore, $2^{\alpha-\min\{\beta_1,\beta_2\}}\prod\limits_{i=1}^{k}p_i^{t_i-\min\{u_i,v_i\}}$=$2^{\alpha-\min\{\beta_1,\beta_3\}}\prod\limits_{i=1}^{k}p_i^{t_i-\min\{u_i,w_i\}}$, which implies that $\min\{\beta_1,\beta_2\}=\min\{\beta_1,\beta_3\}$ and $\min\{u_i,v_i\}=\min\{u_i,w_i\}$, for $1\le i\le k$. Similarly, on comparing $[H]\wedge'[K]=[H]\wedge'[L]$, we get,  $\max\{\beta_1,\beta_2\}=\max\{\beta_1,\beta_3\}$ and $\max\{u_i,v_i\}=\max\{u_i,w_i\}$, for $1\le i\le k$, so $\beta_2=\beta_3$ and $v_i=w_i$, for all $i$, and hence, $[K]=[L]$, a contradiction.
	\end{proof}
	In the next result, we show that for $n=2\prod\limits_{i=1}^{k}p_i^{t_i}$, the lattice $\widetilde\Pi(D_n)$ contains a sublattice isomorphic to $N_5$, except for $k=1$. Moreover, we obtain the complete structure of the sublattice isomorphic to $N_5$ in $\widetilde\Pi(D_n)$, for $n=2\prod\limits_{i=1}^{k}p_i^{t_i}$ with $k\ge 2$. 
	\begin{theorem}
		Let $n=2^\alpha\prod\limits_{i=1}^{k}p_i^{t_i}$, with atleast one $t_i\ne 0$. The lattice $\widetilde\Pi(D_n)$ contains a sublattice isomorphic to $N_5$ if and only if $n=2^\alpha \prod\limits_{i=1}^{k}p_i^{t_i}$ with $\alpha \ge 2$ or $n=2\prod\limits_{i=1}^{k}p_i^{t_i}$ with $k\ge 2$. 
	\end{theorem}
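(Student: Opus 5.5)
The plan rests on an explicit description of the elements of $\widetilde\Pi(D_n)$, read off from Theorem 1.1 and the formulas in the proof of Theorem 2.5. A cyclic subgroup $\bigl<r^d\bigr>$ has $\pi_e=T(n/d)$. A dihedral subgroup $\bigl<r^d,r^is\bigr>$ has $\pi_e=T(n/d)\cup\{2\}$, which is a new class only when $n/d$ is odd and $n/d>1$. So the elements of $\widetilde\Pi(D_n)$ are the sets $T(m)$ with $m\mid n$, together with the sets $T(m)\cup\{2\}$ with $m\mid n$, $m$ odd and $m>1$. The meets and joins are those computed in Theorem 2.5: the join of two classes is the smallest class containing both sets, and the meet is the largest class contained in both. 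I would prove the \emph{if} direction by exhibiting explicit $N_5$'s, and the \emph{only if} direction by showing the lattice is distributive in the remaining cases.

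\emph{Sufficiency.} Suppose $\alpha\ge 1$ and $p_1,p_2$ are distinct odd primes dividing $n$. Take $a=[T(p_1)\cup\{2\}]$, $b=[T(2p_1)]$ and $c=[T(2p_2)]$. Then $a<b$ strictly, since $2p_1\notin T(p_1)\cup\{2\}$, and $c$ is incomparable with both. Any class containing $2p_2$ and $p_1$ must be some $T(m)$ with $2p_1p_2\mid m$, because a set $T(m)\cup\{2\}$ with $m$ odd never contains $2p_2$. Hence $a\vee' c=b\vee' c=[T(2p_1p_2)]$. On the other side, $a\wedge' c$ is the largest class contained in $\{1,2\}$, namely $[T(2)]$. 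Likewise $b\wedge' c=[T(2)]$, since $T(2p_1)\cap T(2p_2)=\{1,2\}$. So these five classes form an $N_5$. For $\alpha\ge2$ with some $t_i\ne0$, I would take instead $a=[T(p_1)\cup\{2\}]$, $b=[T(2p_1)]$ and $c=[T(4)]$. The same reasoning gives joins equal to $[T(4p_1)]$ and meets equal to $[T(2)]$.

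\emph{Necessity.} It remains to show there is no $N_5$ when $\alpha=0$, or when $\alpha=1$ and exactly one odd prime divides $n$. For $\alpha=0$, Theorem 2.6 gives $\widetilde\Pi(D_n)\cong T(n)\times C_2$, which is distributive, so there is no $N_5$. For $n=2p^t$, the elements are
\[
x_j=[T(p^j)],\quad z_j=[T(2p^j)]\ (0\le j\le t),\qquad y_j=[T(p^j)\cup\{2\}]\ (1\le j\le t).
\]
I would write down the join and meet tables from the formulas of Theorem 2.5. For example, $x_i\vee' z_j=z_{\max\{i,j\}}$ and $y_i\vee' z_j=z_{\max\{i,j\}}$, while $x_i\wedge' y_j=x_{\min\{i,j\}}$ and $y_i\wedge' z_j$ equals $y_{\min\{i,j\}}$ or $z_0$.

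Using these tables, I would try to show the lattice is modular. Combined with Theorem 2.8 (no $M_3$) and Birkhoff's Theorem 2.7, that gives distributivity. Concretely, I would look for a rank function and check the modular law $u\le w\Rightarrow u\vee'(v\wedge' w)=(u\vee' v)\wedge' w$ across the three families $x,y,z$. Alternatively, I would try to realise the lattice as a sublattice of a product of two chains, with one coordinate the $p$-exponent and the other recording $\{$no $2$, $2$ alone, $2p^j$ present$\}$.

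The main obstacle is this last verification. A naive coordinatewise embedding fails, because the join of $x_i$ and $z_j$ rounds up to $z_{\max\{i,j\}}$. So I expect to fall back on a direct case check: assume an $N_5$ with $a<b$ and $c$ incomparable, and split according to which of the families $x,y,z$ contain $a$, $b$ and $c$. In each case the tables should force $a=b$, or force $c$ to be comparable with $a$ or $b$.
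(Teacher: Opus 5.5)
\textbf{Sufficiency is correct.} In both of your configurations the meets are $[T(2)]$, because $T(2)$ is a class once $\alpha\ge1$. The joins are forced to be $[T(2p_1p_2)]$, resp.\ $[T(4p_1)]$, because no set $T(m)\cup\{2\}$ with $m$ odd contains $2p_2$ or $4$. Your $\alpha\ge2$ pentagon is exactly the paper's $A_1,\dots,A_5$. For $\alpha=1$, $k\ge2$ you give an explicitly verified instance of the pentagon in Figure~3, whose existence the paper only asserts.

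\textbf{The gap is the necessity half for $n=2p^t$.} Nothing there is proved: you describe what you would try and say the tables ``should'' force a contradiction. Routing this through modularity, Theorem~2.8 and Birkhoff gains nothing, since having no $N_5$ is precisely modularity, so all the content is in the verification you did not carry out.

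Your join table is also wrong at $z_0=\{1,2\}$. For $i\ge1$ one has $x_i\vee' z_0=y_i$, not $z_i$, because $T(p^i)\cup\{2\}$ is itself a class containing both sets. Likewise $y_i\vee' z_0=y_i$, since $z_0\subseteq y_i$. So your formulas $x_i\vee' z_j=z_{\max\{i,j\}}$ and $y_i\vee' z_j=z_{\max\{i,j\}}$ fail whenever $j=0<i$. This is also why you concluded that a coordinatewise embedding fails; it does not.

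\textbf{How to close the gap.} With the corrected table, the product-of-chains idea you mention works and needs no case check. For $0\le j\le t$ put $S(j,0)=T(p^j)$, $S(j,1)=T(p^j)\cup\{2\}$ and $S(j,2)=T(2p^j)$, so that $z_0=S(0,1)=S(0,2)$. Let $P=(C_{t+1}\times C_3)\setminus\{(0,2)\}$ with the coordinatewise order. The map $(j,\epsilon)\mapsto[S(j,\epsilon)]$ is onto $\widetilde\Pi(D_{2p^t})$, and it is an order embedding on $P$. Indeed, $S(j,\epsilon)\subseteq S(j',\epsilon')$ forces:
\begin{itemize}
\item $j\le j'$, by looking at $p^j$;
\item $\epsilon'\ge1$ when $\epsilon\ge1$, by looking at $2$;
\item $\epsilon'=2$ when $\epsilon=2$ (then $j\ge1$ in $P$), by looking at $2p$.
\end{itemize}
The converse inclusion is clear. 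If $(0,2)$ were the coordinatewise join or meet of $u,v\in P$, one of $u,v$ would have to equal $(0,2)$. Hence $P$ is a sublattice of the distributive lattice $C_{t+1}\times C_3$, so $\widetilde\Pi(D_{2p^t})$ is distributive and contains no $N_5$.

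This is shorter than the paper's argument. The paper assumes an $N_5$ and splits into cases according to whether $2\in\pi_e$ of $H$, $K$, $L$ and which dihedral form each class takes. That is essentially the case check you planned to fall back on.
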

	\begin{proof}
		Let $n=2^\alpha \prod\limits_{i=1}^{k}p_i^{t_i}$, with $\alpha \ge 2 $ and not all $t_i$'s are zero. Without the loss of generality, assume that $t_1\ne 0$. Consider the sets $A_1=\{1,2\}$, $A_2=\{1,2,p_1\}$ ,$A_3=\{1,2,4\}$, $A_4=\{1,2,p_1,2p_1\}$, $A_5=\{1,2,4,p_1,2p_1,4p_1\}$ and the subgroups $H_1=\bigl<s\bigr>$, $H_2=\bigl<r^{2^\alpha p_1^{t_1-1}p_2^{t_2}\dots p_k^{t_k}},s\bigr>$, $H_3=\bigl<r^{2^{\alpha-2}p_1^{t_1}p_2^{t_2}\dots p_k^{t_k}},s\bigr>$, $H_4=\bigl<r^{2^{\alpha-1}p_1^{t_1-1}p_2^{t_2}\dots p_k^{t_k}},s\bigr>$, $H_5=\bigl<r^{2^{\alpha-2}p_1^{t_1-1}p_2^{t_2}\dots p_k^{t_k}},s\bigr>$ of $D_n$. Clearly $\pi_e(H_i)=A_i$, for $1\le i\le 5$. Moreover, $[H_1],[H_2],[H_3],[H_4]$ and $[H_5]$ forms the vertices of a pentagon as depicted in Figure 2.
		\begin{figure}[h!]
		\centering
		\begin{tikzpicture}[scale=1.5,  every node/.style={circle, fill=black, minimum size=6pt, inner sep=0pt}]
		
		 Nodes in set U
					\node[label=left:${[H_4]}$] (u2) at (0,1) {};
					\node[label=left:${[H_2]}$] (u1) at (0,0.5) {};
					\node[label=right:${[H_5]}$] (u5) at (0.5,1.5) {};
		           \node[label=right:${[H_3]}$] (u3) at (1,0.75) {};
				\node[label=right:${[H_1]}$] (u4) at (0.5,0) {};
		
	 Edges
				\draw (u1) -- (u2);
				\draw (u2) -- (u5);
		       \draw (u1) -- (u4);
		     \draw (u4) -- (u3);
		     \draw (u3) -- (u5);

		\end{tikzpicture}
		\caption{}
		\end{figure}\\    
		Now, suppose that for $n=2\prod\limits_{i=1}^{k}p_i^{t_i}$, the lattice $\widetilde\Pi(D_n)$ contains a sublattice isomorphic to $N_5$, then there exist distinct classes $[H],[K],[L],[H]\vee'[K]$ and $[H]\wedge'[K]$ with $[H]\vee'[K]=[L]\vee'[K]$ and $[H]\wedge'[K]=[L]\wedge'[K]$. 
	%
				
		Now consider the following cases:\\
		\textbf{Case 1:} If $2\notin \pi_e(H),\pi_e(K)$, then $H=\biggl<r^{2\prod\limits_{i=1}^{k}p_i^{u_i}}\biggr>$ and $K=\biggl<r^{2\prod\limits_{i=1}^{k}p_i^{v_i}}\biggr>$. By, Theorem 2.5, $[H]\vee'[K]=\biggl[\biggl<r^{2\prod\limits_{i=1}^{k}p_i^{\min\{u_i,v_i\}}}\biggr>\biggr]$ and $[H]\wedge'[K]=\biggl[\biggl<r^{2\prod\limits_{i=1}^{k}p_i^{\max\{u_i,v_i\}}}\biggr>\biggr]$, which implies that $2\notin \pi_e(L)$, so $L=\biggl<r^{2\prod\limits_{i=1}^{k}p_i^{w_i}}\biggr>$. Using Theorem 2.5 and the fact $[H]\vee' [K]=[L]\vee'[K]$, we have $\biggl[\biggl<r^{2\prod\limits_{i=1}^{k}p_i^{\min\{u_i,v_i\}}}\biggr>\biggr]=\biggl[\biggl<r^{2\prod\limits_{i=1}^{k}p_i^{\min\{w_i,v_i\}}}\biggr>\biggr]$, so $\pi_e\biggl(\biggl<r^{2\prod\limits_{i=1}^{k}p_i^{\min\{u_i,v_i\}}}\biggr>\biggr)=\pi_e\biggl(\biggl<r^{2\prod\limits_{i=1}^{k}p_i^{\min\{w_i,v_i\}}}\biggr>\biggr)$ which implies that $T\biggl(\prod\limits_{i=1}^{k}p_i^{t_i-\min\{u_i,v_i\}}\biggr)=T\biggl(\prod\limits_{i=1}^{k}p_i^{t_i-\min\{w_i,v_i\}}\biggr)$. As, $\prod\limits_{i=1}^{k}p_i^{t_i-\min\{u_i,v_i\}}$ divides $\prod\limits_{i=1}^{k}p_i^{t_i-\min\{w_i,v_i\}}$ and $\prod\limits_{i=1}^{k}p_i^{t_i-\min\{w_i,v_i\}}$ divides $\prod\limits_{i=1}^{k}p_i^{t_i-\min\{u_i,v_i\}}$, we have $\prod\limits_{i=1}^{k}p_i^{t_i-\min\{u_i,v_i\}}=\prod\limits_{i=1}^{k}p_i^{t_i-\min\{w_i,v_i\}}$. So, $\min\{u_i,v_i\}=\min\{w_i,v_i\}$, for all $1\le i\le k$. \par
		Similarly, using Theorem 2.5 and the fact $[H]\wedge'[K]=[L]\wedge'[K]$, we have $\max\{u_i,v_i\}=\max\{w_i,v_i\}$, for all $1\le i\le k$, which implies that $u_i=w_i$, for all $1\le i\le k$ and consequently, $H=L$, a contradiction.\\
		\textbf{Case 2:} If $2\notin \pi_e(H)$ but $2\in \pi_e(K)$, then $H=\biggl<r^{2\prod\limits_{i=1}^{k}p_i^{u_i}}\biggr>$. Consider the following subcases.\\
		\textbf{Subcase 2.1:} If $\pi_e(K)=\{1,2\}$, then $[K]=[\bigl<s\bigr>]$ and by Theorem 2.5, $[H]\vee'[K]=\biggl[\biggl<r^{2\prod\limits_{i=1}^{k}p_i^{u_i}},s\biggr>\biggr]$ and $[H]\wedge'[K]=[\bigl<e\bigr>]$. Certainly $2\notin \pi_e(L)$, else if $2\in \pi_e(L)$, then $[K]\lesssim [L]$, which is not possible. So, $L=\biggl<r^{2\prod\limits_{i=1}^{k}p_i^{w_i}}\biggr>$. Using Theorem 2.5 and the fact $[H]\vee'[K]=[L]\vee'[K]$, we have, $u_i=w_i$, for all $1\le i\le k$ and consequently, $H=L$, a contradiction.\\
		\textbf{Subcase 2.2:} If $\{1,2\}\subsetneq\pi_e(K)$, then $[K]=\biggl[\biggl<r^{2\prod\limits_{i=1}^{k}p_i^{v_i}},s\biggl>\biggl]$ or $\biggl[\biggl<r^{\prod\limits_{i=1}^{k}p_i^{v_i}},s\biggl>\biggl]$, with not all $v_i$'s are $t_i$.
		For both the choices of $[K]$, using Theorem 2.5 and the fact $[H]\vee'[K]=[L]\vee'[K]$ and $[H]\wedge'[K]=[L]\wedge [K]$, we have $H=L$, which is a contradiction.
		 \vspace{0.2cm}\\
		\textbf{Case 3:} If $2\in \pi_e(H)$ but $2\notin \pi_e(K)$, then $K=\biggl<r^{2\prod\limits_{i=1}^{k}p_i^{v_i}}\biggl>$. Consider the following subcases:\\
		\textbf{Subcase 3.1:} If $\pi_e(H)=\{1,2\}$, then $[H]=[\bigl<s\bigr>]$ and by Theorem 2.5, $[H]\vee'[K]=\biggl[\biggl<r^{2\prod\limits_{i=1}^{k}p_i^{v_i}},s\biggr>\biggr]$ and $[H]\wedge'[K]=[\bigl<e\bigr>]$. Certainly, $2\in \pi_e(L)$ and as $[L]$ is distinct from $[H]$, so, $\{1,2\}\subsetneq \pi_e(L)$. Thus, $[L]=\biggl[\biggl<r^{2\prod\limits_{i=1}^{k}p_i^{w_i}},s\biggr> \biggr]$ or $\biggl[\biggl<r^{\prod\limits_{i=1}^{k}p_i^{w_i}},s\biggr> \biggr]$, with not all $w_i=t_i$. If $[L]=\biggl[\biggl<r^{2\prod\limits_{i=1}^{k}p_i^{w_i}},s\biggr> \biggr]$, then by Theorem 2.5 and by the fact $[H]\vee'[K]=[L]\vee'[K]$, we have $\biggl[\biggl<r^{2\prod\limits_{i=1}^{k}p_i^{v_i}},s\biggr>\biggr]=\biggl[\biggl<r^{2\prod\limits_{i=1}^{k}p_i^{\min\{w_i,v_i\}}},s\biggr>\biggr]$, i.e., $T\biggl(\prod\limits_{i=1}^{k}p_i^{t_i-v_i}\biggr)\cup\{2\}=T\biggl(\prod\limits_{i=1}^{k}p_i^{t_i-\min \{w_i,v_i\}}\biggr)\cup\{2\}$ and in particular, $T\biggl(\prod\limits_{i=1}^{k}p_i^{t_i-v_i}\biggr)=T\biggl(\prod\limits_{i=1}^{k}p_i^{t_i-\min \{w_i,v_i\}}\biggr)$, which implies that $\min\{w_i,v_i\}=v_i$, for all $1\le i\le k$. Similarly, using Theorem 2.5 and the fact $[H]\wedge'[K]=[L]\wedge'[K]$, we have $[\bigl<e\bigr>]=\biggl[\biggl<r^{2\prod\limits_{i=1}^{k}p_i^{\max\{w_i,v_i\}}}\biggr>\biggr]=\biggl[\biggl<r^{2\prod\limits_{i=1}^{k}p_i^{w_i}}\biggr>\biggr]$, hence $w_i=t_i$ for all $1\le i\le k$ and consequently, $[L]=[\bigl<s\bigr>]=[H]$, a contradiction.\par
		If $[L]=\biggl[\biggl<r^{\prod\limits_{i=1}^{k}p_i^{w_i}},s\biggr> \biggr]$, then by Theorem 2.5 and as $[H]\vee'[K]=[L]\vee'[K]$, we have $\biggl[\biggl<r^{2\prod\limits_{i=1}^{k}p_i^{v_i}},s\biggr> \biggr]=\biggl[\biggl<r^{\prod\limits_{i=1}^{k}p_i^{\min\{w_i,v_i\}}},s\biggr> \biggr]$, i.e., $T\biggl(\prod\limits_{i=1}^{k}p_i^{t_i-v_i}\biggr)\cup\{2\}=T\biggl(2\prod\limits_{i=1}^{k}p_i^{t_i-\min\{w_i,v_i\}}\biggr)$. Certainly, there exists $i_0$ with $\min\{w_{i_0},v_{i_0}\}\neq t_{i_0}$, else $[H]\vee'[K]=[H]$, which is not possible. Then $2p_{i_0}\in T\biggl(2\prod\limits_{i=1}^{k}p_i^{t_i-\min\{w_i,v_i\}}\biggr)$ but $2p_{i_0} \notin T\biggl(\prod\limits_{i=1}^{k}p_i^{t_i-v_i}\biggr)\cup\{2\}$, a contradiction.\\
		\textbf{Subcase 3.2:} If $\{1,2\}\subsetneq\pi_e(H)$, then $[H]=\biggl[\biggl<r^{2\prod\limits_{i=1}^{k}p_i^{u_i}},s\biggr>\biggr]$ or $\biggl[\biggl<r^{\prod\limits_{i=1}^{k}p_i^{u_i}},s\biggr>\biggr]$, with not all $u_i=t_i$.\par Suppose that $[H]=\biggl[\biggl<r^{2\prod\limits_{i=1}^{k}p_i^{u_i}},s\biggr>\biggr]$, then using Theorem 2.5, we have $[H]\vee'[K]=\biggl[\biggl<r^{2\prod\limits_{i=1}^{k}p_i^{\min\{u_i,v_i\}}},s\biggr>\biggr]$ and $[H]\wedge'[K]=\biggl[\biggl<r^{2\prod\limits_{i=1}^{k}p_i^{\max\{u_i,v_i\}}}\biggr>\biggr]$. This implies $[L]=\biggl[\biggl<r^{2\prod\limits_{i=1}^{k}p_i^{w_i}},s\biggr>\biggr]$ or $\biggl[\biggl<r^{\prod\limits_{i=1}^{k}p_i^{w_i}},s\biggr>\biggr]$, where not all $w_i=t_i$. \par
		If $[L]=\biggl[\biggl<r^{2\prod\limits_{i=1}^{k}p_i^{w_i}},s\biggr>\biggr]$, then using Theorem 2.5 and the fact $[H]\vee'[K]=[L]\vee'[K]$ and $[H]\wedge'[K]=[L]\wedge'[K]$, it is easy to show that $[H]=[L]$, a contradiction. \par
		 Now, if $[L]=\biggl[\biggl<r^{\prod\limits_{i=1}^{k}p_i^{w_i}},s\biggr>\biggr]$, then using Theorem 2.5 and the fact $[H]\vee'[K]=[L]\vee'[K]$, we have, $\biggl[\biggl<r^{2\prod\limits_{i=1}^{k}p_i^{\min\{u_i,v_i\}}},s\biggr>\biggr]=\biggl[\biggl<r^{\prod\limits_{i=1}^{k}p_i^{\min\{w_i,v_i\}}},s\biggr>\biggr]$, i.e., $T\biggl(\prod\limits_{i=1}^{k}p_i^{t_i-\min\{u_i,v_i\}}\biggr)\cup\{2\}=T\biggl(2\prod\limits_{i=1}^{k}p_i^{t_i-\min\{w_i,v_i\}}\biggr)$, this implies $t_i=\min\{u_i,v_i\}=\min\{w_i,v_i\}$, for $1\le i\le k$. Similarly, using $[H]\wedge'[K]=[L]\wedge'[K]$, we get $\max\{u_i,v_i\}=\max\{w_i,v_i\}$, for $1\le i\le k$. So, $u_i=w_i$, for $1\le i\le k$. Now the join implies $T\biggl(\prod\limits_{i=1}^{k}p_i^{t_i-\min\{u_i,v_i\}}\biggr)\cup\{2\}=T\biggl(2\prod\limits_{i=1}^{k}p_i^{t_i-\min\{u_i,v_i\}}\biggr)$, which holds only if $t_i=\min\{u_i,v_i\}$, for all $i$ and consequently, $[H]\vee'[K]=[\bigl<s\bigr>]$, a contradiction.   \par
		If $[H]=\biggl[\biggl<r^{\prod\limits_{i=1}^{k}p_i^{u_i}},s\biggr>\biggr]$, then clearly $[L]=\biggl[\biggl<r^{\prod\limits_{i=1}^{k}p_i^{w_i}},s\biggr>\biggr]$ and using similar argument as in above paragraph, we have $[H]=[L]$, a contradiction. \vspace{0.2cm}\\
		\textbf{Case 4:} If $2\in \pi_e(H),\pi_e(K)$, then $2\in \pi_e(L)$. As $[H]$ and $[K]$ are incomparable, so, $\{1,2\}\subsetneq [H],[K]$. Thus the choice of $[H]$ are $\biggl[\biggl<r^{2\prod\limits_{i=1}^{k}p_i^{u_i}},s\biggr>\biggr]$ or $\biggl[\biggl<r^{\prod\limits_{i=1}^{k}p_i^{u_i}},s\biggr>\biggr]$ and the choices of $[K]$ are $\biggl[\biggl<r^{2\prod\limits_{i=1}^{k}p_i^{v_i}},s\biggr>\biggr]$ or $\biggl[\biggl<r^{\prod\limits_{i=1}^{k}p_i^{v_i}},s\biggr>\biggr]$, where not all $u_i=t_i$ and not all $v_i=t_i$.\\
		\textbf{Subcase 4.1:} If $[H]=\biggl[\biggl<r^{2\prod\limits_{i=1}^{k}p_i^{u_i}},s\biggr>\biggr]$ and $[K]=\biggl[\biggl<r^{2\prod\limits_{i=1}^{k}p_i^{v_i}},s\biggr>\biggr]$, then the choices of $[L]$ are $\biggl[\biggl<r^{2\prod\limits_{i=1}^{k}p_i^{w_i}},s\biggr>\biggr]$ or $\biggl[\biggl<r^{\prod\limits_{i=1}^{k}p_i^{w_i}},s\biggr>\biggr]$. Suppose $[L]=\biggl[\biggl<r^{2\prod\limits_{i=1}^{k}p_i^{w_i}},s\biggr>\biggr]$, then by Theorem 2.5 and the fact $[H]\vee'[K]=[L]\vee'[K]$ and $[H]\wedge'[K]=[L]\wedge'[K]$, we have $w_i=u_i$ and consequently, $[H]=[L]$, a contradiction. \par
		Now, suppose $[L]=\biggl[\biggl<r^{\prod\limits_{i=1}^{k}p_i^{w_i}},s\biggr>\biggr]$, then using Theorem 2.5 and the fact $[H]\vee'[K]=[L]\vee'[K]$, we have, $\biggl[\biggl<r^{2\prod\limits_{i=1}^{k}p_i^{\min\{u_i,v_i\}}},s\biggr>\biggr]=\biggl[\biggl<r^{\prod\limits_{i=1}^{k}p_i^{\min\{w_i,v_i\}}},s\biggr>\biggr]$, i.e., $T\biggl(\prod\limits_{i=1}^{k}p_i^{t_i-\min\{u_i,v_i\}}\biggr)\cup\{2\}=T\biggl(2\prod\limits_{i=1}^{k}p_i^{t_i-\min\{w_i,v_i\}}\biggr)$, which implies $t_i=\min\{u_i,v_i\}=\min\{w_i,v_i\}$, for all $i$. Similarly, comparing the meet, we get $\max\{u_i,v_i\}=\max\{w_i,v_i\}$, for all $i$. Therefore, $u_i=w_i$, for all $i$ and hence $[H]\vee'[K]=[\bigl<s\bigr>]$, a contradiction.\\
		\textbf{Subcase 4.2:} If $[H]=\biggl[\biggl<r^{2\prod\limits_{i=1}^{k}p_i^{u_i}},s\biggr>\biggr]$ and $[K]=\biggl[\biggl<r^{\prod\limits_{i=1}^{k}p_i^{v_i}},s\biggr>\biggr]$, then the choices of $[L]$ are $\biggl[\biggl<r^{2\prod\limits_{i=1}^{k}p_i^{w_i}},s\biggr>\biggr]$ or $\biggl[\biggl<r^{\prod\limits_{i=1}^{k}p_i^{w_i}},s\biggr>\biggr]$. Suppose $[L]=\biggl[\biggl<r^{2\prod\limits_{i=1}^{k}p_i^{w_i}},s\biggr>\biggr]$, then using Theorem 2.5 and the fact $[H]\vee'[K]=[L]\vee'[K]$ and $[H]\wedge'[K]=[L]\wedge'[K]$, we have, $u_i=w_i$, for all $1\le i\le k$, and consequently, $[H]=[L]$, a contradiction.\par 
		Now, suppose $[L]=\biggl[\biggl<r^{\prod\limits_{i=1}^{k}p_i^{w_i}},s\biggr>\biggr]$, then by Theorem 2.5 and the fact $[H]\vee'[K]=[L]\vee'[K]$, we have $\biggl[\biggl<r^{\prod\limits_{i=1}^{k}p_i^{\min\{u_i,v_i\}}},s\biggr>\biggr]=\biggl[\biggl<r^{\prod\limits_{i=1}^{k}p_i^{\min\{w_i,v_i\}}},s\biggr>\biggr]$, i.e., $T\biggl(2\prod\limits_{i=1}^{k}p_i^{t_i-\min\{u_i,v_i\}}\biggr)=T\biggl(2\prod\limits_{i=1}^{k}p_i^{t_i-\min\{w_i,v_i\}}\biggr)$, so, $\min\{u_i,v_i\}=\min\{w_i,v_i\}$, for all $1\le i\le k$. Similarly, using Theorem 2.5 and the fact $[H]\wedge'[K]=[L]\wedge'[K]$, we have, $\biggl[\biggl<r^{2\prod\limits_{i=1}^{k}p_i^{\max\{u_i,v_i\}}},s\biggr>\biggr]=\biggl[\biggl<r^{\prod\limits_{i=1}^{k}p_i^{\max\{w_i,v_i\}}},s\biggr>\biggr]$, i.e., $T\biggl(\prod\limits_{i=1}^{k}p_i^{t_i-\max\{u_i,v_i\}}\biggr)\cup\{2\}=T\biggl(2\prod\limits_{i=1}^{k}p_i^{t_i-\max\{w_i,v_i\}}\biggr)$. Certainly, if there is $i_0$ with $t_{i_0}\neq \max\{u_{i_0},v_{i_0}\}$, then $2p_{i_0}\in T\biggl(2\prod\limits_{i=1}^{k}p_i^{t_i-\max\{w_i,v_i\}}\biggr) $ but $2p_{i_0}\notin T\biggl(\prod\limits_{i=1}^{k}p_i^{t_i-\max\{u_i,v_i\}}\biggr)\cup\{2\} $, which is not possible. Therefore, $t_i=\max\{u_i,v_i\}=\max\{w_i,v_i\}$, so, $u_i=w_i$, for all $1\le i\le k$. Thus, the sublattice of $\widetilde \Pi(D_n)$ isomorphic to $N_5$ is depicted in Figure 3.
		\begin{figure}[h]
			\centering
			\begin{tikzpicture}[scale=1.5,  every node/.style={circle, fill=black, minimum size=6pt, inner sep=0pt}]
				
				\node[label=left:${\biggl[\biggl<r^{\prod\limits_{i=1}^{k}p_i^{u_i}},s\biggr>\biggr]}$] (u2) at (0,1.4) {};
				\node[label=left:${\biggl[\biggl<r^{2\prod\limits_{i=1}^{k}p_i^{u_i}},s\biggr>\biggr]}$] (u1) at (0,0.7) {};
				\node[label=right:${\biggl[\biggl<r^{\prod\limits_{i=1}^{k}p_i^{\min\{u_i,v_i\}}},s\biggr>\biggr]}$] (u5) at (0.7,2.1) {};
				\node[label=right:${\biggl[\biggl<r^{\prod\limits_{i=1}^{k}p_i^{v_i}},s\biggr>\biggr]}$] (u3) at (1.4,1.05) {};
				\node[label=right:${\bigl[\bigl<s\bigr>\bigr]}$] (u4) at (0.7,0) {};
				
				\draw (u1) -- (u2);
				\draw (u2) -- (u5);
				\draw (u1) -- (u4);
				\draw (u4) -- (u3);
				\draw (u3) -- (u5);
			\end{tikzpicture}
			\caption{}
		\end{figure}\\
		Note that such a sublattice isomorphic to $N_5$ exists if $k\ge2$.\\
		\textbf{Subcase 4.3:} If $[H]=\biggl[\biggl<r^{\prod\limits_{i=1}^{k}p_i^{u_i}},s\biggr>\biggr]$ and $[K]=\biggl[\biggl<r^{2\prod\limits_{i=1}^{k}p_i^{v_i}},s\biggr>\biggr]$, then $[L]=\biggl[\biggl<r^{\prod\limits_{i=1}^{k}p_i^{w_i}},s\biggr>\biggr]$. Using Theorem 2.5 and the fact $[H]\vee'[K]=[L]\vee'[K]$ and $[H]\wedge'[K]=[L]\wedge'[K]$, we have $\min\{u_i,v_i\}=\min\{w_i,v_i\}$ and $\max\{u_i,v_i\}=\max\{w_i,v_i\}$, for all $1\le i\le k$. Therefore, $u_i=w_i$, for all $1\le i\le k$ and consequently, $[H]=[L]$, a contradiction.\\
		\textbf{Subcase 4.4:} If $[H]=\biggl[\biggl<r^{\prod\limits_{i=1}^{k}p_i^{u_i}},s\biggr>\biggr]$ and $[K]=\biggl[\biggl<r^{\prod\limits_{i=1}^{k}p_i^{v_i}},s\biggr>\biggr]$, then $[L]=\biggl[\biggl<r^{\prod\limits_{i=1}^{k}p_i^{w_i}},s\biggr>\biggr]$. By a similar argument as in Subcase 4.3, we get a contradiction. 
	\end{proof}
	\begin{remark}
		For the case $n=2\prod\limits_{i=1}^{k}p_i^{t_i}$ with atleast one $t_i\ne 0$, Theorem 2.9 gives a complete description of the sublattice isomorphic to $N_5$ in $\widetilde\Pi(D_n)$ as depicted in Figure 3. 
	\end{remark}
	In the light of Theorem 2.5, 2.6, 2.8 and 2.9, we conclude the following.
	\begin{theorem}
		The lattice $\widetilde \Pi(D_n)$ is modular if and only if $n$ is one of the following:
		\begin{enumerate}
			\item $n=2^\alpha $, where $\alpha $ is a non-negative integer.
			\item  $n=\prod\limits_{i=1}^{k}p_i^{t_i}$, where $p_i$'s are distinct odd primes and $t_i$'s are non-negative integers. 
			\item $n=2p^{\alpha}$, where $p$ is an odd prime and $\alpha\ge1$.
		\end{enumerate}
	\end{theorem}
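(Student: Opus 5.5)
The plan is to use Dedekind's characterization: a lattice is modular if and only if it has no sublattice isomorphic to $N_5$. By Theorem 2.5, $\widetilde\Pi(D_n)$ is a lattice, so the question is purely about pentagons. Write $n=2^\alpha\prod_{i=1}^{k}p_i^{t_i}$ and split according to whether some $t_i\neq 0$.

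First suppose all $t_i=0$, so $n=2^\alpha$. Then $D_n$ is a $2$-group, and by Theorem 2.2 the poset $\widetilde\Pi(D_n)$ is a chain (indeed $C_{\alpha+1}$, as in the Examples). A chain is distributive, hence modular, which gives item (1).

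Now suppose some $t_i\neq 0$. Theorem 2.9 says $\widetilde\Pi(D_n)$ contains a copy of $N_5$ exactly when $\alpha\ge 2$, or when $\alpha=1$ and $k\ge 2$, where $k$ counts the odd primes actually dividing $n$. So in those cases the lattice is not modular. The remaining values are:
\begin{enumerate}
\item $\alpha=0$, i.e.\ $n$ odd. Theorem 2.6 gives $\widetilde\Pi(D_n)\cong T(n)\times C_2$. Divisor lattices are distributive and products of distributive lattices are distributive, so the lattice is modular; this is item (2).
\item $\alpha=1$ with exactly one odd prime, i.e.\ $n=2p^{\alpha}$ with $\alpha\ge1$. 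Theorem 2.9 excludes $N_5$ here, so the lattice is modular; this is item (3).
\end{enumerate}
Theorem 2.8, which excludes $M_3$, is not needed for modularity. It does show via Theorem 2.7 that items (2) and (3) are in fact distributive.

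The main obstacle is the bookkeeping with $k$ in Theorem 2.9. That theorem lets $k$ include primes with $t_i=0$, so I would first normalize by taking the $p_i$ to be exactly the odd primes dividing $n$. Then "$k\ge2$" means two distinct odd prime divisors, and the case $\alpha=1$, $k=1$ is genuinely $n=2p^{\alpha}$.

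I would also check that the converse direction of Theorem 2.9 really excludes $N_5$ for $n=2p^{\alpha}$. The only surviving configuration in its case analysis is Subcase 4.2 (Figure 3). That configuration needs two indices $i$ to realize incomparable exponent vectors $u\neq v$ with $\max\{u_i,v_i\}=t_i$ for all $i$, together with the strict comparabilities shown in the figure. With a single prime this cannot happen, so I would verify that concretely.

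Finally, I would cross-check with the Examples. $\widetilde\Pi(D_p)$ and $\widetilde\Pi(D_{p_1p_2})$ are distributive, consistent with item (2). For $D_{2p}$ one can list the classes directly to confirm there is no pentagon.
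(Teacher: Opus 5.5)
Your proposal is correct and is essentially the paper's own argument. The paper obtains Theorem 2.10 simply by combining the chain case for $n=2^\alpha$, Theorem 2.6 for odd $n$, and the $N_5$ criterion of Theorem 2.9, with the characterization modular $\Leftrightarrow$ no $N_5$ used implicitly; your remarks that $k$ must count only the odd primes with $t_i\ge 1$ and that Theorem 2.8 is needed only for distributivity are both well taken, and the only slip is that the chain criterion for $p$-groups is Theorem 2.1 in the paper's numbering, not Theorem 2.2.
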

	\begin{remark}
		Theorem 2.8 gurantees that for any choice of $n$, the lattice $\widetilde\Pi(D_n)$ does not contain a sublattice isomorphic to $M_3$. Therefore, from Theorem 2.9, $\widetilde\Pi(D_n)$ is modular if and only if it is distributive.
	\end{remark}
	\section*{Acknowledgement}
	Tushar Halder is thankful to CSIR for financial assistance in the form of Junior Research Fellowship (JRF), bearing the File Number: 09/0414(22038)/2025-EMR-I.
	
	\bibliographystyle{acm}
	\bibliography{References}
	
\end{document}